\let\emph\undefined\newcommand{\emph}[1]{\textsl{#1}}
\newcommand{\spaceplease}{\needspace{5\baselineskip}}
\tikzstyle{tikzfig}=[baseline=-0.25em,scale=0.5]
\tikzstyle{none}=[inner sep=0mm]
\newcommand{\tikzfig}[1]{%
	{\tikzstyle{every picture}=[tikzfig]
		\IfFileExists{#1.tikz}
		{\input{#1.tikz}}
		{%
			\IfFileExists{./figures/#1.tikz}
			{\input{./figures/#1.tikz}}
			{\tikz[baseline=-0.5em]{\node[draw=red,font=\color{red},fill=red!10!white] {\textit{#1}};}}%
	}}%
}
\tikzstyle{every loop}=[]
\tikzstyle{black dot}=[fill=black, draw=black, shape=circle, minimum size=3pt, inner sep=0pt]
\tikzstyle{black dot small}=[fill=black, draw=black, shape=circle, minimum size=3pt, inner sep=0pt]
\tikzstyle{wbox}=[fill=white, draw=black, shape=rectangle, minimum height=0.5cm, minimum width=0.01cm]
\tikzstyle{bbox}=[fill=white, draw=blue, shape=rectangle, minimum height=0.5cm, minimum width=0.01cm]
\tikzstyle{rbox}=[fill=white, draw=red, shape=rectangle, minimum height=0.5cm, minimum width=0.01cm]
\tikzstyle{bwbox}=[draw=blue, shape=rectangle, minimum width=2cm, minimum height=0.5cm]
\tikzstyle{bbwbox}=[draw=blue, shape=rectangle, minimum width=1cm, minimum height=1cm]
\tikzstyle{big white circle}=[fill=white, draw=black, shape=circle, minimum width=0.75cm]
\tikzstyle{white dot big}=[fill=white, draw=black, shape=circle, inner sep=1pt]
\tikzstyle{white dot}=[fill=white, draw=black, shape=circle, minimum size=3pt, inner sep=0pt]
\tikzstyle{flat box}=[fill=white, draw=black, shape=rectangle, minimum width=1.3cm, minimum height=0.5cm,fill=morphismcolor]
\tikzstyle{square}=[fill=white, draw=black, shape=rectangle]
\tikzstyle{flat box 2}=[fill=white, draw=black, shape=rectangle, minimum height=0.5cm, minimum width=0.01cm,fill=morphismcolor]
\tikzstyle{bigbox}=[fill=white, draw=black, shape=rectangle, minimum height=0.5cm, minimum width=0.8cm,fill=morphismcolor]
\tikzstyle{over }=[front]
\tikzstyle{theta}=[fill=blue, draw=blue, shape=ellipse, minimum height=6pt, minimum width=6pt, inner sep=0pt]
\tikzstyle{thetabig}=[fill=blue, draw=blue, shape=ellipse, minimum width=1cm, minimum height=0.01cm]
\tikzstyle{thetainv}=[fill=blue, draw=red, shape=ellipse, minimum height=6pt, minimum width=6pt, inner sep=0pt]
\tikzstyle{thetabinv}=[fill=blue, draw=red, shape=ellipse, minimum width=1cm, minimum height=0.01cm]
\tikzstyle{bigdisk}=[draw=black, shape=circle, minimum width=3cm]
\tikzstyle{wdisk}=[shape=circle, minimum width=0.48cm,fill=white]
\tikzstyle{bigdisk2}=[draw=black, fill=lightgray, shape=circle, minimum width=3cm]
\tikzstyle{little disk}=[fill=white, draw=black, shape=circle, minimum width=0.5cm]
\tikzstyle{mid arrow}=[-, postaction={on each segment={mid arrow}}]
\tikzstyle{end arrow}=[->]
\tikzstyle{mover}=[-, link]
\tikzstyle{mydots}=[-,dotted]
\tikzstyle{open}=[-, line width=1pt,draw=blue,postaction={on each segment={mid arrow}}]
\tikzstyle{thick}=[-,line width=1pt]
\tikzstyle{dotarrow}=[->,dotted,draw=red,line width=1pt]
\tikzstyle{bdotarrow}=[->,dotted,draw=blue,line width=1pt]
\tikzstyle{red mid arrow}=[-, draw={rgb,255: red,214; green,42; blue,51}, postaction={on each segment={mid arrow}}, line width=1pt]
\tikzstyle{RED}=[-, draw={rgb,255: red,214; green,42; blue,51}]
\tikzstyle{REDdotted}=[-,dashed, draw={rgb,255: red,214; green,42; blue,51}]
\tikzstyle{reddots}=[-,dotted, draw={rgb,255: red,214; green,42; blue,51}]
\tikzstyle{bluedashed}=[-,dashed, draw=blue]
\tikzstyle{blue}=[-, draw=blue]
\tikzstyle{blue mid arrow}=[-, draw={rgb,255: red,23; green,37; blue,167}, postaction={on each segment={mid arrow}}, line width=1pt]
\tikzstyle{over}=[-, link]
\tikzstyle{mover}=[-, link]
\tikzstyle{mapsto}=[{|->}]
\tikzset{
	on each segment/.style={
		decorate,
		decoration={
			show path construction,
			moveto code={},
			lineto code={
				\path [#1]
				(\tikzinputsegmentfirst) -- (\tikzinputsegmentlast);
			},
			curveto code={
				\path [#1] (\tikzinputsegmentfirst)
				.. controls
				(\tikzinputsegmentsupporta) and (\tikzinputsegmentsupportb)
				..
				(\tikzinputsegmentlast);
			},
			closepath code={
				\path [#1]
				(\tikzinputsegmentfirst) -- (\tikzinputsegmentlast);
			},
		},
	},
	mid arrow/.style={postaction={decorate,decoration={
				markings,
				mark=at position .7 with {\arrow[#1]{stealth}}
	}}},
}
\tikzset{%
	link/.style    = { white, double = black, line width = 1.8pt,
		double distance = 0.4pt },
	channel/.style = { white, double = black, line width = 0.8pt,
		double distance = 0.8pt },
}
\newtheoremstyle{mytheorem}
  {\topsep}
  {\topsep}
  {\slshape}
  {0pt}
  {\bfseries}
  {.}
  { }
  {\thmname{#1}\thmnumber{ #2}\thmnote{ {\normalfont\slshape(#3)}}}
  \newtheoremstyle{mydefinition}
    {\topsep}
    {\topsep}
    {\normalfont}
    {0pt}
    {\bfseries}
    {.}
    { }
    {\thmname{#1}\thmnumber{ #2}\thmnote{ {\normalfont\slshape(#3)}}}
\theoremstyle{mytheorem}
\newtheorem{theorem}{Theorem}[section]
\newtheorem*{rep@theorem}{\rep@title}
\newcommand{\newreptheorem}[2]{%
	\newenvironment{rep#1}[1]{%
		\def\rep@title{#2 \ref{##1}}%
		\begin{rep@theorem}}%
		{\end{rep@theorem}}}
\newtheorem{lemma}[theorem]{Lemma}
\newtheorem{proposition}[theorem]{Proposition}
\newtheorem{corollary}[theorem]{Corollary}
\theoremstyle{mydefinition}
\newtheorem{definition}[theorem]{Definition}
\newenvironment{example}
{\pushQED{\qed}\exx}
{\popQED\endexx}
\newenvironment{remark}
{\pushQED{\qed}\remm}
{\popQED\endremm}
\numberwithin{equation}{section}
\newenvironment{pnum}{\begin{enumerate}[topsep=2pt,parsep=2pt,partopsep=2pt,itemsep=0pt,label={(\roman{*})}]}{\end{enumerate}}
\DeclareMathSymbol{\Phiit}{\mathalpha}{letters}{"08}\let\Phi\undefined\newcommand{\Phi}{\Phiit}
\DeclareMathSymbol{\Psiit}{\mathalpha}{letters}{"09}\let\Psi\undefined\newcommand{\Psi}{\Psiit}
\DeclareMathSymbol{\Sigmait}{\mathalpha}{letters}{"06}\let\Sigma\undefined\newcommand{\Sigma}{\Sigmait}
\DeclareMathSymbol{\Xiit}{\mathalpha}{letters}{"04}
\DeclareMathSymbol{\Lambdait}{\mathalpha}{letters}{"03}\let\Lambda\undefined\newcommand{\Lambda}{\Lambdait}
\DeclareMathSymbol{\Piit}{\mathalpha}{letters}{"05}\let\Pi\undefined\newcommand{\Pi}{\Piit}
\DeclareMathSymbol{\Gammait}{\mathalpha}{letters}{"00}\let\Gamma\undefined\newcommand{\Gamma}{\Gammait}
\DeclareMathSymbol{\Omegait}{\mathalpha}{letters}{"0A}\let\Omega\undefined\newcommand{\Omega}{\Omegait}
\DeclareMathSymbol{\Upsilonit}{\mathalpha}{letters}{"07}\let\Upsilon\undefined\newcommand{\Upsilon}{\Upilonit}
\DeclareMathSymbol{\Thetait}{\mathalpha}{letters}{"02}\let\Theta\undefined\newcommand{\Theta}{\Thetait}
\def\Hom{\mathrm{Hom}}
\def\id{\mathrm{id}}
\def\SL{\operatorname{SL}}
\def\dim{\mathrm{dim}}
\let\to\undefined\newcommand{\to}{\longrightarrow}
\let\mapsto\undefined\newcommand{\mapsto}{\longmapsto}
\newcommand{\catf}[1]{\mathsf{#1}}
\newcommand{\Proj}{\operatorname{\catf{Proj}}}
\newcommand{\Mod}{\catf{Mod}}
\def\op{\mathrm{op}}
\newcommand{\naka}{\catf{N}^\catf{r}}
\newcommand{\ra}[1]{\ \xrightarrow{\ \  #1  \  \ }\ }
\def\Ch{\catf{Ch}_k}
\newcommand{\drinfeld}{\mathbb{D}}
\newcommand{\mfc}{\mathfrak{F}_{\cat{C}}}
\def\PBun{\catf{PBun}}
\newcommand{\trace}{\catf{t}}
\newcommand{\cat}[1]{\mathcal{#1}}
\newcommand{\flint}{\int_{\text{\normalfont f}\mathbb{L}}}
\newcommand{\alg}{\catf{A}}\newcommand{\coalg}{\catf{F}}\newcommand{\somealg}{\catf{T}}
\let\C\undefined\newcommand{\C}{\catf{C}}
\newcommand{\Surfc}{\catf{Surf}^\catf{c}}
\newcommand{\OC}{\catf{OC}}
\newcommand{\lint}{\int_\mathbb{L}}
\newcommand{\rint}{\int^\mathbb{R}}
\newcommand{\pF}{\mathbb{F}_\bullet}
\newcommand{\pA}{\mathbb{A}_\bullet}
\newcommand{\mfd}{\mathfrak{F}^\cat{C}}
\newcommand{\bspace}[2]{\left( \  #1 \ , \    #2  \ \right)}
\newcommand{\ostar}{\mathbin{\mathpalette\make@circled\star}}
\newcommand{\make@circled}[2]{%
	\ooalign{$\m@th#1\smallbigcirc{#1}$\cr\hidewidth$\m@th#1#2$\hidewidth\cr}%
}
\newcommand{\smallbigcirc}[1]{%
	\vcenter{\hbox{\scalebox{0.77778}{$\m@th#1\bigcirc$}}}%
}
\newcommand{\mlabel}[1]{{\footnotesize $#1$}}
\definecolor{Blue}  {rgb} {0.282352,0.239215,0.803921}
\definecolor{Green} {rgb} {0.133333,0.545098,0.133333}
\definecolor{Red}   {rgb} {0.803921,0.000000,0.000000}
\definecolor{Violet}{rgb} {0.580392,0.000000,0.827450}
\definecolor{morphismcolor}{rgb}{1,1,1}
\newtheorem*{theorem*}{Theorem}
\newtheorem*{corollary*}{Corollary}
\newcommand{\dolph}{^{\includegraphics[width=0.4cm]{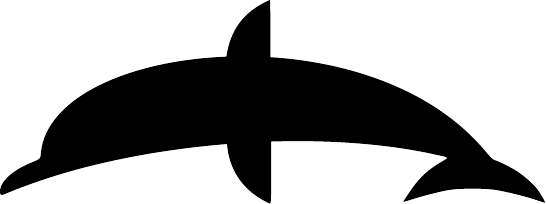}}}
\renewcommand\section{\@startsection {section}{1}{\z@}%
	{-3.5ex \@plus -1ex \@minus -.2ex}%
	{2.3ex \@plus.2ex}%
	{\normalfont\scshape\centering}}
\begin{document} 
\vspace*{-10mm}
	\begin{flushright}
		\small
		{\sffamily [ZMP-HH/21-4]} \\
		\textsf{Hamburger Beiträge zur Mathematik Nr.~892}\\
		\textsf{CPH-GEOTOP-DNRF151}
	\end{flushright}
	
\vspace{8mm}
	
	\begin{center}
		\textbf{\large{The differential graded Verlinde Formula and the Deligne Conjecture}}\\
		\vspace{6mm}

{\large Christoph Schweigert ${}^a$ and  Lukas Woike ${}^b$}

\vspace{3mm}

\normalsize
{\slshape $^a$ Fachbereich Mathematik\\ Universit\"at Hamburg\\
	Bereich Algebra und Zahlentheorie\\
	Bundesstra\ss e 55\\  D-20146 Hamburg }

\vspace*{3mm}	

{\slshape $^b$ Institut for Matematiske Fag\\ K\o benhavns Universitet\\
	Universitetsparken 5 \\  DK-2100 K\o benhavn \O }

	\end{center}
\vspace*{1mm}
	\begin{abstract}\noindent 
A modular category $\cat{C}$ gives rise to a differential graded modular functor, i.e.\ a system of projective mapping class group representations on chain complexes. This differential graded modular functor assigns to the torus the Hochschild chain complex and, in the dual description, the Hochschild cochain complex of $\cat{C}$. On both complexes, the monoidal product of $\cat{C}$ induces the structure of an $E_2$-algebra, to which we refer as the \emph{differential graded Verlinde algebra}. At the same time, the modified trace induces on the tensor ideal of projective objects in $\cat{C}$ a Calabi-Yau structure so that the cyclic Deligne Conjecture endows the Hochschild cochain and chain complex of $\cat{C}$ with a second $E_2$-structure. Our main result is that the action of a specific element $S$ in the mapping class group of the torus transforms the differential graded Verlinde algebra into this second $E_2$-structure afforded by the Deligne Conjecture. This result is established for both the Hochschild chain and the Hochschild cochain complex of $\cat{C}$. In general, these two versions of the result are inequivalent. In the case of Hochschild chains, we obtain a block diagonalization of the Verlinde algebra through the action of the mapping class group element $S$. In the semisimple case, both results reduce to the Verlinde formula. In the non-semisimple case, we recover after restriction to zeroth (co)homology earlier proposals for non-semisimple generalizations of the Verlinde formula.
\end{abstract}

\tableofcontents

\normalsize
\section{Introduction and summary}
For any fusion category over an algebraically closed field $k$ of characteristic zero, the $k$-vector space spanned by the isomorphism classes $[x_0], [x_1] \dots,[x_n]$ of its simple objects becomes an associative and unital algebra by means of the monoidal product:
By semisimplicity, we have a decomposition $x_i \otimes x_j \cong \bigotimes_{\ell=0}^n N_{ij}^\ell x_\ell$ of $x_i\otimes x_j$ into a direct sum over the basis of simple objects, in which $x_\ell$ occurs with multiplicity $N_{ij}^\ell$, a non-negative integer. These fusion rules allow us to write the multiplication explicitly as
\begin{align} [x_i] \otimes [x_j] = \sum_{\ell=0}^n N_{ij}^\ell [x_\ell] \ . \label{eqnfusionmult} \end{align}
By a slight abuse of notation, the symbol $\otimes$ will also be used for the multiplication.
The class $[I]$ of the monoidal unit $I$ (which by convention is the zeroth object $x_0$ in the list of simple objects) is the unit of the multiplication. The resulting algebra is called the \emph{Verlinde algebra} of the fusion category. One can also see it as the linearized version of the Grothendieck ring or the $K_0$-ring of $\cat{C}$, see \cite[Section~4.5]{egno}. 

New tools for the 
computation 
of the fusion coefficients $N_{ij}^\ell$
become available 
if $\cat{C}$ is  a \emph{semisimple modular category},
i.e.\ if it additionally has a non-degenerate braiding and a ribbon structure (we recall the terminology in more detail in a moment, see page~\pageref{pageterminology}). Modular categories form an important class of categories in representation theory and conformal field theory \cite{mose88,turaev,klm,huangverlinde,huang,egno}.
In this case, the famous \emph{Verlinde formula}
conjectured by Verlinde
\cite{verlinde}
and proven by Moore and Seiberg \cite{mose88,mooreseiberg}, Cardy~\cite{cardy}, Witten~\cite{witten}
and Turaev~\cite{turaev}
expresses the fusion coefficients $N_{ij}^\ell$ via the \emph{$S$-matrix}, an invertible $(n+1)\times (n+1)$-matrix whose $(i,j)$-entry 
is given by the evaluation of the graphical calculus of $\cat{C}$ \cite{rt1,rt2,turaev} on the Hopf link labeled by the two simple objects $x_i$ and $x_j$:
\begin{align} S_{ij}:=\tikzfig{Smatrix-s}\in k \label{eqndefsmatrixelements}
\end{align}
Now  the Verlinde formula asserts 
\begin{align} N_{ij}^\ell = \sum_{p=0}^n \frac{S_{ip}S_{jp}\left(S^{-1}\right)_{p \ell}}{S_{0p}} \in k \ . \label{eqnverlindeformula}\end{align}
It should be mentioned that~\eqref{eqnverlindeformula} is only one of several incarnations of the Verlinde formula.

The Verlinde formula~\eqref{eqnverlindeformula}
relies  on semisimplicity.
Nonetheless, a lot of the ingredients above can be given sense beyond semisimplicity such that aspects of the Verlinde formula still hold. Proposals in this direction have been  given
in  \cite{fhst,fgst,grv,gainutdinovrunkel}, see \cite{lo,glo,cgr} for examples of modular categories which are not semisimple.
One of the key differences between semisimple and non-semisimple finite tensor categories is that, in
the non-semisimple case, the homological algebra of tensor categories 
enriches 
the picture:
For instance, the (Hochschild) cohomology of finite tensor categories  has been  studied e.g.\ in \cite{gk,etingofostrik,schau,bichon,lq,negronplavnik}
(this refers mostly, but not exclusively to the Hopf algebraic  case).
Multiplicative structures 
have been investigated in
\cite{farinatisolotar,menichi,hermann}. 
More recently, the interaction of this homological algebra with low-dimensional topology has been developed in \cite{svea,dva,svea2,dmf}. 
The purpose of this article is to understand the content of the Verlinde formula 
within  a differential graded framework. This framework will 
feature the relevant quantities appearing in the homological 
algebra of a modular category and the higher structures that 
they naturally come equipped with.

Since this generalization can be best understood and proven as a topological result, it will be beneficial to recall the topological underpinning of the semisimple Verlinde formula. Indeed, a topological viewpoint already informed \cite{verlinde}. The viewpoint presented here is mostly due to \cite{witten,turaev}:
If $\cat{C}$ is a semisimple modular category, then $\cat{C}$ gives rise to a three-dimensional topological field theory $Z_\cat{C}$ by the Reshetikhin-Turaev construction \cite{rt1,rt2}.
In fact, semisimple modular categories are equivalent to once-extended three-dimensional topological field theories by a result of Bartlett, Douglas, Schommer-Pries and Vicary \cite{BDSPV15}.
The topological field theory $Z_\cat{C}$ assigns to the torus $\mathbb{T}^2$ the vector space
$Z_\cat{C}(\mathbb{T}^2)\cong k \left[\  [x_0] ,[x_1], \dots ,[x_n] \   \right]$ spanned by the isomorphism classes of simple objects of $\cat{C}$.
Since every mapping class group element can be seen as an invertible three-dimensional bordism,
the vector space $Z_\cat{C}(\mathbb{T}^2)$ comes with an action of the mapping class group $\SL(2,\mathbb{Z})$ of the torus
(generally, the mapping class group actions will be projective because of the \emph{framing anomaly}).
The multiplication~\eqref{eqnfusionmult} induced by the monoidal product
can be obtained by the evaluation of $Z_\cat{C}$ on the three-dimensional bordism 
\begin{align} P\times \mathbb{S}^1= \tikzfig{torus-pop0-s} \ :\ \mathbb{T}^2 \sqcup\mathbb{T}^2\to\mathbb{T}^2  \, \label{eqntoruspop0}     \end{align}
where $P:\mathbb{S}^1 \sqcup \mathbb{S}^1\to \mathbb{S}^1$ is the two-dimensional pair of pants bordism. 
Note that this treats the two $\mathbb{S}^1$-factors of the torus differently: While on the first factor two copies of the circle are fused together via the pair of pants, the second factor is just a spectator.
As a result of treating the $\mathbb{S}^1$-factors differently, the multiplication $
Z_\cat{C}( P\times\mathbb{S}^1  ):Z_\cat{C}(\mathbb{T}^2)\otimes Z_\cat{C}(\mathbb{T}^2)\to Z_\cat{C}(\mathbb{T}^2)$ is maximally incompatible with the action of the mapping class group $\SL(2,\mathbb{Z})$ of the torus on the vector space $Z_\cat{C}(\mathbb{T}^2)$ meaning that, except for trivial cases, the mapping class group elements will never act through algebra morphisms.
More explicitly, if we pick a mapping class group element $R\in \SL(2,\mathbb{Z})$
and conjugate the multiplication with $R$, i.e.\ replace it with
$ Z_\cat{C}(R) \circ Z_\cat{C}( P\times\mathbb{S}^1  ) \circ \left(  Z_\cat{C}(R)^{-1}\otimes Z_\cat{C}(R) ^{-1}          \right)$,
the result
will generally be different from $Z_\cat{C}( P\times\mathbb{S}^1  )$. Phrased differently, the mapping class group orbit of $Z_\cat{C}( P\times\mathbb{S}^1  )$ is very non-trivial. 
Now the idea is to find within the mapping class group orbit of $Z_\cat{C}( P\times\mathbb{S}^1  )$ a multiplication which is as easy as possible, preferably diagonal. Then $Z_\cat{C}( P\times\mathbb{S}^1  )$ may be reconstructed from this easy multiplication and the mapping class group action on $Z_\cat{C}(\mathbb{T}^2)$.
Verlinde's formula,
when understood topologically,
tells us that this is indeed possible in the semisimple case:
To describe the solution, we identify a mapping class group element of the torus 
with the element in $\SL(2,\mathbb{Z})$
describing its action on the first homology
$H_1(\mathbb{T}^2;\mathbb{Z})\cong\mathbb{Z}^2$; 
it is important that here the `first' circle factor is exactly the `first' one from the definition of the multiplication, i.e.\ the one participating in the fusion.
Now consider the mapping class $ S:=\begin{pmatrix} 0 & -1 \\ 1 &\phantom{-}0\end{pmatrix}\in\SL(2,\mathbb{Z})$, 
the so-called \emph{$S$-transformation}.  If we conjugate the multiplication
$Z_\cat{C}( P\times\mathbb{S}^1  )$
with the automorphism $Z_\cat{C}(S)$ of $Z_\cat{C}(\mathbb{T}^2)$,
we transform the multiplication~\eqref{eqnfusionmult} coming from the monoidal product
into the very simple \emph{diagonal multiplication} which can be shown to be given by
\begin{align}
[x_i]\star [x_j] = \delta_{i,j} d_i^{-1}\cdot  [x_i] \ , \label{eqnstarprod0}
\end{align}
where $d_i=S_{i,0}=S_{0,i} \in k^\times$ is the \emph{quantum dimension} of $x_i$.  
In other words, the automorphism $Z_\cat{C}(S)$
diagonalizes the multiplication coming from the monoidal product. In yet another equivalent description, we may say that the map
\begin{align}
Z_\cat{C}(S) :      \bspace{Z_\cat{C}(\mathbb{T}^2)}{Z_\cat{C}( P\times\mathbb{S}^1  )} \ra{\cong} \bspace{Z_\cat{C}(\mathbb{T}^2)}{\star}\label{eqnverlindeisoalg}
\end{align}
is an isomorphism of algebras. In the canonical basis of $Z_\cat{C}(\mathbb{T}^2)$ given by the classes of simple objects, the matrix elements of the automorphism $Z_\cat{C}(S)$ turn out to be precisely the numbers $S_{ij}$ from \eqref{eqndefsmatrixelements}. If we use this matrix presentation of $Z_\cat{C}(S)$ and spell out what it means for $Z_\cat{C}(S)$ to be an algebra isomorphism of the form \eqref{eqnverlindeisoalg}, we arrive at the Verlinde formula~\eqref{eqnverlindeformula}.

When attempting to generalize the topological setup used to describe the Verlinde formula above to the non-semisimple case,
one faces
the problem that in order to build a once-extended three-dimensional topological field theory in the sense of \cite{rt1,rt2,BDSPV15}
from a modular category, semisimplicity is needed.  
If one is willing to give up the duality of the bordism category, the results in \cite{gai} 
generalize a substantial part of the Reshetikhin-Turaev construction to the non-semisimple case using work of Lyubashenko \cite{lyubacmp,lyuba,lyubalex}
and the theory of modified traces \cite{geerpmturaev,mtrace1,mtrace2,mtrace3,mtrace}. These constructions, however, are still insensitive to the homological algebra of the modular category and the higher structures associated with it (which is exactly what we  include in this article).
Fortunately, the structures actually needed to describe the topological setup above \emph{do} exist within a homotopy coherent framework, namely in terms of \emph{differential graded modular functors} instead of topological field theories, see \cite{tillmann,baki} for the definition of a modular functor with values in vector spaces.
A differential graded modular functor comes very close to a three-dimensional chain complex valued topological field theory, but cannot necessarily
be evaluated on non-invertible three-dimensional bordisms. In other words, a differential graded modular functor is an assignment of a chain complex (the so-called \emph{conformal block}) to each surface. These complexes will carry a homotopy coherent projective action of the respective mapping class groups and will satisfy excision, i.e.\ are compatible with gluing. 
In \cite{dmf}, it is proven, as an extension of \cite{svea,dva,svea2},
that any not necessarily semisimple modular category gives rise to a differential graded modular functor that in zeroth homology reduces to Lyubashenko's vector space valued modular functor \cite{lyubacmp,lyuba,lyubalex}.

In order to present our main results on the differential graded Verlinde algebra,
let us recall and fix some terminology: 
For a fixed field $k$, which will be assumed to be algebraically closed throughout the article (unlike for the discussion of the semisimple case above, we do not assume characteristic zero),       \label{pageterminology}
a \emph{finite category}  is an abelian category enriched over finite-dimensional $k$-vector spaces 
with enough projective objects and finitely many simple objects up to isomorphism; additionally,  we require that every object has finite length.
A \emph{tensor category} is a
linear abelian rigid monoidal category with simple unit.
A \emph{finite tensor category} in the sense of Etingof and Ostrik \cite{etingofostrik} is a tensor category with a finite category as underlying linear category. 
A finite tensor category $\cat{C}$ with a braiding, i.e.\ natural isomorphisms $c_{X,Y}:X\otimes Y\to Y\otimes X$ for $X,Y\in\cat{C}$ subject to several coherence conditions, is called a \emph{braided finite  tensor category}. 
From a topological viewpoint, the braiding extends the monoidal product to the structure of an algebra over the little disks operad $E_2$.
An extension to an algebra over the \emph{framed} little disks operad \cite{salvatorewahl} amounts to a \emph{balancing}, i.e.\ a natural automorphism of the identity whose components $\theta_X :X\to X$ satisfy
$
\theta_{X\otimes Y}=c_{Y,X}c_{X,Y}(\theta_X\otimes \theta_Y)$ for $X,Y\in\cat{C}$ and $\theta_I=\id_I$, where $I$ is the monoidal unit of $\cat{C}$.
A \emph{finite ribbon category}
is a braided finite tensor category $\cat{C}$ with balancing $\theta$ that is compatible with the duality $-^\vee$ in the sense that $\theta_{X^\vee}=\theta_X^\vee$ for $X\in\cat{C}$. 
The \emph{Müger center} of a
braided finite  tensor category $\cat{C}$ is the full subcategory of $\cat{C}$ given by the \emph{transparent objects}, i.e.\ those objects $X\in\cat{C}$ that satisfy $c_{Y,X}c_{X,Y}=\id_{X\otimes Y}$ for every $Y\in\cat{C}$.
The braiding  $c$ (and then also the braided finite tensor category) is referred to as \emph{non-degenerate}\label{nondegenpage} if its Müger center is as small as possible, namely spanned by the monoidal unit under finite direct sums.
A \emph{modular category} is a non-degenerate finite ribbon category. 	

The main result of \cite{dmf}
is that any modular category gives canonically rise to a differential graded modular functor, i.e.\ a symmetric monoidal functor	\begin{align}
\mfc\ :\ \cat{C}\text{-}\Surfc\to\Ch  \label{eqnmfcintro}
\end{align}
from (the central extension of) a category of extended surfaces, whose boundary components are labeled with projective objects in $\cat{C}$,
to the category of differential graded vector spaces over $k$ (one can also allow non-projective boundary labels). The differential graded modular functor $\mfc$ satisfies an excision property which allows us to compute the conformal block $\mfc(\Sigma,\underline{X})$ for a  surface $\Sigma$ with boundary label $\underline{X}$ via a pair of pants decomposition and a gluing procedure using homotopy coends. This is a consequence of the fact that, on a given fixed surface, the differential graded modular functor is constructed as a homotopy colimit over a contractible $\infty$-groupoid of colored markings; this is referred to as \emph{homotopy coherent Lego Teichmüller game} and an extension of the techniques used by Bakalov and Kirillov \cite{bakifm}, which, in turn, crucially rely on classical results on cut systems of surfaces due to Grothendieck \cite{grothendieck}, Hatcher and Thurston \cite{hatcherthurston} and Harer \cite{harer}.
On the closed torus, the differential graded modular functor produces the Hochschild complex of $\cat{C}$. More precisely,
the choice of a \emph{certain specific colored marking} on the torus gives us an equivalence $
\lint^{X\in\Proj\cat{C}}\cat{C}(X,X)\ra{\simeq}\mfc(\mathbb{T}^2)$
from the Hochschild complex $\lint^{X\in\Proj\cat{C}}\cat{C}(X,X)$ of $\cat{C}$ to the conformal block $\mfc(\mathbb{T}^2)$.
Recall that for a finite (tensor) category $\cat{C}$, the Hochschild complex is
the homotopy coend $\lint^{X\in\Proj \cat{C}} \cat{C}(X,X)$ running over the endomorphism spaces of projective objects. Explicitly, it is given by the (normalized) chains on the simplicial vector space
\begin{center}
	{	\begin{equation}\label{hheqn}
		\begin{tikzcd}
		& 	\displaystyle \dots 
		\ar[r, shift left=4] \ar[r, shift right=4] \ar[r] & \displaystyle \bigoplus_{X_0,X_1 \in \Proj\cat{C}}  \cat{C}(X_1,X_0)\otimes \cat{C}(X_0,X_1) \ar[r, shift left=2] \ar[r, shift right=2]
		\ar[l, shift left=2] \ar[l, shift right=2]
		& \displaystyle \bigoplus_{X_0 \in \Proj \cat{C}} \cat{C}(X_0,X_0)\ ,  \ar[l] \\
		\end{tikzcd}
		\end{equation}}
\end{center}
where $\cat{C}(-,-)$ denotes the morphism vector spaces.
When writing $\cat{C}$, as a linear category, as the category of finite-dimensional modules over a finite-dimensional algebra $A$, we recover  the Hochschild complex of $A$. This is a form of the \emph{Agreement Principle} of McCarthy \cite{mcarthy} and Keller \cite{keller}.

In \cite{dva} it was already established that the Hochschild chain complex of a finite braided tensor category comes with a non-unital $E_2$-multiplication generalizing the one discussed above in~\eqref{eqnfusionmult} for the semisimple case. 
Already in the setting of ordinary linear modular functors, it is a crucial idea
for the understanding of the Verlinde formula
to consider centers and class functions simultaneously. For an in-depth study of the multiplicative structure on the differential graded conformal block for the torus, this means that the Hochschild chain complex of $\cat{C}$ must be treated in tandem with the Hochschild \emph{cochain} complex of $\cat{C}$, i.e.\ the homotopy end
$\rint_{X\in \Proj \cat{C}} \cat{C}(X,X)$.
The latter is the value of the \emph{dual} differential graded modular functor $ \mfd:=\mfc^*$  on the torus, i.e.\ the functor obtained by taking point-wise  the dual chain complex in \eqref{eqnmfcintro}. The fact that this really yields the Hochschild cochain complex makes use of the Calabi-Yau structure on the tensor ideal $\Proj\cat{C}\subset \cat{C}$, see \cite[Remark~3.12]{dmf} and also \cite{trace}.
While the Hochschild chain and cochain complex of a modular category are degree-wise dual as chain complexes,
obtaining an $E_2$-structure on the Hochschild \emph{cochain} complex of a finite tensor category that is induced by the monoidal product is significantly more involved.
We prove the following result for the Hochschild cochain complex of a unimodular braided finite tensor category (unimodularity is implied by modularity):

\begin{reptheorem}{thmverlindecochain}
	Let $\cat{C}$ be a unimodular braided finite tensor category with chosen trivialization $D\cong I$ of the distinguished invertible object of $\cat{C}$.
	Then the Hochschild cochain complex $\rint_{X\in \Proj \cat{C}} \cat{C}(X,X)$ inherits from its braided monoidal product the structure of an $E_2$-algebra. 
\end{reptheorem}
We refer to this $E_2$-algebra as the \emph{differential graded Verlinde algebra} on the Hochschild cochain complex of $\cat{C}$ and denote the product by $\otimes$. 
If $\cat{C}$ is modular, we have,
by passing to the dual differential graded modular functor,
the homotopy coherent mapping class group action on $\rint_{X\in \Proj \cat{C}} \cat{C}(X,X)$ at our disposal. By acting with the mapping class group element $S=\begin{pmatrix} 0 & -1 \\ 1 &\phantom{-}0\end{pmatrix}\in \SL(2,\mathbb{Z})$, we obtain another multiplication --- ideally a simpler one which does not depend on the monoidal product.
This is exactly the idea behind the Verlinde formula 
in its formulation~\eqref{eqnverlindeisoalg}.
In fact, there is a natural candidate for an $E_2$-structure on the Hochschild cochain complex, which very conveniently does not see the monoidal product at all, but only the linear structure, namely the well-known $E_2$-structure  afforded by the 
\emph{Deligne Conjecture}: Deligne conjectured in 1993 that the Gerstenhaber structure on the Hochschild cohomology of an associative algebra \cite{gerstenhaber} has its origin in 
an $E_2$-structure on the Hochschild cochain complex of that algebra (for a suitable model of $E_2$). By now numerous proofs exist \cite{tamarkin,cluresmith,bergerfresse}, including proofs of the \emph{cyclic Deligne Conjecture} \cite{tradlerzeinalian,costellotcft,kaufmann}, a refinement for symmetric Frobenius algebras.

As our first main result, we prove that the $S$-transformation (or rather its inverse because of the dualization) indeed transforms the $E_2$-algebra induced by the monoidal product  (Theorem~\ref{thmverlindecochain}) into Deligne's $E_2$-structure.
This means that, as in the semisimple case,
the Verlinde algebra lies in the mapping class group orbit of a  simpler $E_2$-algebra structure that just uses the linear structure of $\cat{C}$.

\begin{reptheorem}{thmdgva1}[Verlinde formula for the Hochschild cochain complex]
	For any modular category $\cat{C}$, the action of the mapping class group element  $S^{-1}=\begin{pmatrix} \phantom{-}0 & 1 \\- 1 &0\end{pmatrix}\in \SL(2,\mathbb{Z})$   on the Hochschild cochain complex of $\cat{C}$  yields an equivalence
	\begin{align} \mfd(S^{-1}):    \bspace{  \rint_{X\in \Proj \cat{C}} \cat{C}(X,X)  }{\otimes} \simeq \bspace{ \rint_{X\in \Proj \cat{C}} \cat{C}(X,X)  }{\smile} \label{eqnSinvequiv}
	\end{align} 
	of $E_2$-algebras which are given as follows:\begin{itemize}
		\item On the left hand side, the $E_2$-structure is the differential graded Verlinde algebra on the Hochschild cochain complex induced by the monoidal product
		(Theorem~\ref{thmverlindecochain}).
		\item On the right hand side, the $E_2$-structure is the one afforded by Deligne's Conjecture with the underlying multiplication being the cup product $\smile$.
	\end{itemize}
\end{reptheorem}
The proof provides natural models of the homotopy end and the $E_2$-operad such that
$\mfd(S^{-1})$
is even an isomorphism of $E_2$-algebras.
Moreover, we prove that both $E_2$-algebras in Theorem~\ref{thmdgva1} naturally extend to framed $E_2$-algebras such that $\mfd(S^{-1})$ is an equivalence of framed $E_2$-algebras, see Corollary~\ref{corframed} for the definition of these framed $E_2$-structures and the precise statement.

The effect of $S$ on the non-unital $E_2$-structure on the Hochschild \emph{chain} complex from \cite{dva}
is different and the subject of our second main result:

\begin{reptheorem}{thmdgva2}[Verlinde formula for the Hochschild chain complex]
	For any modular category $\cat{C}$, the 
	action of the mapping class group element  $S=\begin{pmatrix} 0 & -1 \\ 1 &\phantom{-}0\end{pmatrix}\in \SL(2,\mathbb{Z})$ yields an equivalence
	\begin{align}  \mfc( S): \bspace{  \lint^{X\in \Proj \cat{C}} \cat{C}(X,X)  }{\otimes} \simeq \bspace{ \lint^{X\in \Proj \cat{C}} \cat{C}(X,X)  }{\star } 
	\end{align} 
	of non-unital $E_2$-algebras whose multiplication, up to homotopy, is concentrated in degree zero. 
	\begin{itemize}
		\item On the left hand side, the $E_2$-structure is the differential graded Verlinde algebra on the Hochschild chain complex induced the monoidal product \cite{dva}.
		
		\item On the right hand side, the non-unital $E_2$-structure is the almost trivial one that is a part of the cyclic version of Deligne's Conjecture applied to the Calabi-Yau structure coming from the modified trace
		on the tensor ideal of projective objects.
	\end{itemize}
\end{reptheorem}

The product $\star$ was defined and investigated in \cite{trace}
using the \emph{trace field theory} $\Phi_\cat{C}:\OC\to \Ch$, an open-closed topological conformal field theory 
that can be associated to  a finite tensor category and a suitable trivialization of the right Nakayama functor of $\cat{C}$ as right $\cat{C}$-module functor
relative to a pivotal structure on $\cat{C}$.
Therefore, we have  the following additional information on $\star$:
\begin{itemize}
	\item The product $\star$ is block diagonal \cite[Proposition~5.3]{trace}.
	Hence, Theorem~\ref{thmdgva2} implies that the $S$-transformation `block diagonalizes' the product $\otimes$.
	\item The $\star$-product of the identity morphisms $\id_P$ and $\id_Q$ of two projective objects $P$ and $Q$ is given, up to boundary, by the handle element $\xi_{P,Q}\in\cat{C}(P,P)$ of $\Phi_\cat{C}$ \cite[Theorem~5.6]{trace},
	\begin{align} \id_P \star \id_Q \simeq \xi_{P,Q} \ ,\label{starproduct}  \end{align}
	a certain central endomorphism $\xi_{P,Q}:P\to P$	whose modified trace is given by
	$\trace_P \xi_{P,Q} = \dim\,\cat{C}(P,Q)$. 	
	Hence, the modified traces of the handle elements recover the Cartan matrix of $\cat{C}$.
\end{itemize}
Formula~\eqref{starproduct} is a generalization of~\eqref{eqnstarprod0} as can be seen 
if $P$ is simple. Then $\xi_{P,Q}$ can be identified with a number and
\begin{align} \id_P\star\id_ Q \simeq \left(   d_P^\text{m}\right)^{-1} \dim  \, \cat{C}(P,Q) \cdot \id_P \ ,
\end{align} 
where $d_P^\text{m}\in k^\times$ is the modified dimension of $P$.
Therefore, the product $\star$ extracted from the cyclic Deligne Conjecture generalizes the product $\star$ from~\eqref{eqnstarprod0} to the non-semisimple case.

Having stated the two main results, 
we will now briefly highlight special cases and implications of the results:

\subparagraph{Restriction to zeroth (co)homology.}
Specializing
Theorem~\ref{thmdgva1} to zeroth cohomology
recovers the formula proposed and proven by Gainutdinov and Runkel \cite{grv} 
as a generalization of the Verlinde formula to the non-semisimple case (Corollary~\ref{corgrv}).
This formula features a complete system of the simple objects in $\cat{C}$ and multiplicities in Jordan-Hölder series, see Corollary~\ref{corgrv}.
However, the differential graded Verlinde algebra on the Hochschild cochain complex is significantly richer than its restriction to zeroth cohomology. In particular, its product and Gerstenhaber bracket are non-trivial, see Example~\ref{exbracketnonzero}. 
For Theorem~\ref{thmdgva2}, the situation is different. Here one only has a statement in zeroth homology.
It leads to a formula involving the fusion coefficients in the linearized $K_0$-ring of $\cat{C}$ (Corollary~\ref{corKzero}).

\subparagraph{Partial three-dimensional extension for differential graded modular functors.}
A priori, a modular functor is less than a three-dimensional topological field theory.
For the differential graded modular functor of a modular category, however,
we can give the following partial extension result:

\begin{repcorollary}{corhighgenus}
	The differential graded modular functor $\mfc$ associated to a modular category $\cat{C}$ 
	extends to three-dimensional oriented bordisms of the form $\Sigma \times\mathbb{S}^1: \left( \mathbb{T}^2\right)^{\sqcup p} \to \left( \mathbb{T}^2\right)^{\sqcup q}$, where $\Sigma :  \left( \mathbb{S}^1\right)^{\sqcup p} \to \left( \mathbb{S}^1\right)^{\sqcup q}$ is a compact oriented two-dimensional bordism such that every component of $\Sigma$ has at least one incoming boundary component. 
\end{repcorollary}

On the bordisms $P\times\mathbb{S}^1$ from~\eqref{eqntoruspop0} and its reversed version,
this extension is given by the product $\otimes$ from Theorem~\ref{thmdgva2} and the product from Theorem~\ref{thmverlindecochain} dualized via the Calabi-Yau structure, respectively. On the solid torus seen as bordism $\mathbb{T}^2\to \emptyset$, one obtains the modified trace precomposed with the $S$-transformation. An extension to the solid closed torus as bordism $\emptyset \to \mathbb{T}^2$ will generally not exist in the non-semisimple case (Remark~\ref{remfurtherextension}).
Based on Corollary~\ref{corhighgenus},
we formulate in Corollary~\ref{cordimred} a higher genus Verlinde formula in terms of the trace field theory of $\cat{C}$.

\subparagraph{Conventions.}
Plenty of key notions have already been defined in the introduction, and more will follow in the main text.
In this additional short list, we want to collect some more technical or notational conventions.

\begin{enumerate}[topsep=2pt,parsep=2pt,partopsep=2pt,itemsep=0pt,label={(\arabic{*})}]
	\item
	For the entire article, we work over a fixed algebraically closed field $k$. We do not assume that $k$ has characteristic zero.
	
	\item We use the notation
	$\Ch$ for 
	the symmetric monoidal category of chain complexes over $k$.
	Whenever needed, we equip it with its projective model structure in which weak equivalences (for short: equivalences) are quasi-isomorphisms and fibrations are degree-wise surjections.
	By a \emph{(canonical) equivalence} between chain complexes (notation $\simeq$ as opposed to the notation $\cong$ reserved for isomorphisms)
	we do not necessarily mean 
	a map in either direction, but also allow a (canonical) zigzag.

	\item
	We follow the duality conventions of \cite[Section~2.10]{egno}:
	For every object $X \in \cat{C}$
	in a \emph{rigid} monoidal category,  we denote  the  \emph{left dual}  by $X^\vee$
	(it comes with an evaluation $d_X:X^\vee \otimes X\to I$ and a coevaluation $b_X:I\to X\otimes X^\vee$),
	and the  \emph{right dual} by ${^\vee \! X}$ 
	(it comes with an evaluation $\widetilde d_X : X\otimes  {^\vee \! X} \to I$ and a coevaluation $\widetilde b_X : I \to {^\vee \! X}\otimes X$).
	Evaluation and coevaluation are subject to the usual zigzag identities.
	By left and right duality we obtain the natural adjunction isomorphisms
	\begin{align}
	\begin{array}{rclrcl}
	\cat{C}(X\otimes Y,Z)&\cong& \cat{C}(X,Z\otimes Y^\vee) \ , &
	\cat{C}(Y^\vee\otimes X,Z)&\cong& \cat{C}(X,Y\otimes Z) \ , \\
	\cat{C}(X\otimes {^\vee \! Y}, Z) &\cong& \cat{C}(X,Z\otimes Y)\ ,&
	\cat{C}(Y\otimes X,Z)&\cong& \cat{C}(X,{^\vee \! Y} \otimes Z)    
	\end{array}
	\end{align}for $X,Y,Z\in\cat{C}$.

	\item Any finite (tensor) category $\cat{C}$ is a module category over the symmetric monoidal category of finite-dimensional $k$-vector spaces.
	This means that we have a tensoring $V\otimes X\in \cat{C}$ for a finite-dimensional vector space $V$ and $X\in \cat{C}$ and also a powering $X^V=V^*\otimes X\in \cat{C}$. Here $V^*$ is the dual vector space of $V$.
	\label{convtensoring}
	
	\item
	For the definition of the $S$-matrix in~\eqref{eqndefsmatrixelements},
	we have already used the \emph{graphical calculus} for morphisms in (braided) monoidal categories, see e.g.\ \cite{kassel}. This graphical calculus will be used throughout the text whenever the corresponding computations in equations would become too complicated or hardly insightful.
	Objects are symbolized by vertical lines and the monoidal product by the juxtaposition of lines (the monoidal unit is the empty collection of lines). The braiding and inverse braiding are denoted by an overcrossing and undercrossing, respectively. 
	The evaluation and coevaluation are denoted by a cap and cup, respectively.
	The morphisms are always to be read from bottom to top. The composition is represented by vertical stacking.\label{congraphcalc}
	
\end{enumerate}

 \subparagraph{Acknowledgments.} 
We would like to thank 
Adrien Brochier, 
Lukas Müller,
Cris Negron,
Peter Schauenburg
and Nathalie Wahl
for helpful discussions. 

CS is supported by the Deutsche Forschungsgemeinschaft (DFG, German Research
Foundation) 
under Germany’s Excellence Strategy -- EXC 2121 ``Quantum Universe'' -- 390833306.
LW gratefully acknowledges support by 
the Danish National Research Foundation through the Copenhagen Centre for Geometry
and Topology (DNRF151)
and by the European Research Council (ERC) under the European Union's Horizon 2020 research and innovation programme (grant agreement No. 772960).

\spaceplease
\section{The Hochschild chain complex of a modular category as differential graded conformal block for the torus\label{secrevewidmf}}
It was explained in the introduction that, after the choice of an auxiliary datum, namely a  specific colored marking of the torus,
we obtain an equivalence $\lint^{X\in\Proj\cat{C}}\cat{C}(X,X)\ra{\simeq}\mathfrak{F}_\cat{C}(\mathbb{T}^2)$
from the Hochschild complex of $\cat{C}$ to the differential graded conformal block $\mathfrak{F}_\cat{C}(\mathbb{T}^2)$ of the torus.
The purpose of this section is to give a model of the conformal block for the torus which is closely related to  $\lint^{X\in\Proj\cat{C}} \cat{C}(X,X)$, but on which the effect of the $S$-transformation can be described in a more convenient way. This model is related to the one used in \cite{dva,dmf}, but we need to go  beyond that to later prove the main results
in later sections.

In any finite tensor category $\cat{C}$, one may define the \emph{canonical coend}
$ \mathbb{F}:=\int^{X\in\cat{C}}X^\vee \otimes X$
and the \emph{canonical end}
$
\mathbb{A}=\int_{X\in\cat{C}} X \otimes X ^\vee
$
which, due to their appearance
in \cite{lyubacmp,lyuba}, are also called the \emph{Lyubashenko coend and end},
respectively
(the duality conventions for the (co)end are not standard and vary between sources).
In \cite[Section~3.2]{dva} 
we introduced the \emph{(finite) homotopy coend}
$ \flint^{X \in \Proj \cat{C}} X^\vee \otimes X$
(the subscript `f' stands for `finite' because the coend can be reduced to finitely many projective objects). 
It is a differential graded object in $\cat{C}$ and can be defined by means of a simplicial object formally similar to the one used for the Hochschild complex.
This homotopy coend serves as a projective resolution of $\mathbb{F}$ that we use to express the Hochschild chain complex:

\begin{proposition}[$\text{\cite[Corollary~3.7 \& Theorem~3.9]{dva}}$] \label{corresocoend2}
	Let $\cat{C}$ be a pivotal finite tensor category.
	The (finite) homotopy coend
	$\flint^{X \in \Proj \cat{C}} X^\vee \otimes X$ is 
	a projective resolution of the canonical coend $\mathbb{F}=\int^{X\in\cat{C}} X^\vee\otimes X$ and  allows us to write the Hochschild complex of $\cat{C}$ up to equivalence as
	\begin{align} \label{eqnequivHH}\lint^{X\in\Proj \cat{C}} \cat{C}(X,X) \simeq \cat{C}\left( I, \flint^{X \in \Proj \cat{C}} X^\vee \otimes X  \right) \ ;  \end{align}
	in fact,  $\lint^{X\in\Proj \cat{C}} \cat{C}(X,X) \simeq \cat{C}\left( I, \mathbb{F}_\bullet  \right)$ 
	for any projection resolution $\mathbb{F}_\bullet$ of $\mathbb{F}$.
\end{proposition}

There is an analogue of Proposition~\ref{corresocoend2} for the \emph{Hochschild cochain complex}
$\rint_{X\in\Proj\cat{C}}\cat{C}(X,X)$
of a finite tensor category, 
i.e.\ the homotopy end over the subcategory of projective objects (which coincide with the injective ones).
Since, unlike the homotopy coend defining the Hochschild chain complex, homotopy \emph{ends}
were not recalled in the introduction, let us give a brief overview:
Let $\cat{A}$ be a linear category over $k$ (this will then be applied to $\cat{A}=\Proj\cat{C}$, where $\cat{C}$ is a finite tensor category).
Then its Hochschild cochain complex $\rint_{a\in \cat{A}} \cat{A}(a,a)$ 
is the homotopy end over the endomorphism spaces of objects in $\cat{A}$, i.e.\
the cochain complex of vector spaces which in cohomological degree $n\ge 0$ is given by
\begin{align}\label{eqnhochschildcomplex}
\left(\rint_{a\in \cat{A}} \cat{A}(a,a) \right)^n =\left\{ \begin{array}{cl}{\small \prod\limits_{a_0\in\cat{A}} \cat{A}(a_0,a_0)} & {\small\text{for $n=0$}} \ , \\ {\small\prod\limits_{a_0,\dots,a_n\in\cat{A}} \Hom_k     \left(    \cat{A}(a_1,a_0) \otimes \dots \otimes \cat{A}(a_n,a_{n-1}),\cat{A}(a_n,a_0)       \right)} & {\small \text{for $n\ge 1$}} \ .\end{array} \right.
\end{align}
The differential comes as usual from the composition in $\cat{A}$.
On the Hochschild cochain complex, one may define the \emph{cup product} $\smile$:
Let $\varphi$ and $\psi$ be a $p$-cochain and a $q$-cochain, respectively, and let
$(a_0,\dots,a_{p+q})$ be a $p+q$-tuple of objects in $\cat{A}$.
Then
the $(a_0,\dots,a_{p+q})$-component $(	\varphi \smile \psi)_{a_0,\dots,a_{p+q}}$ of the $p+q$-cochain $\varphi \smile \psi$ is given by
$
(	\varphi \smile \psi)_{a_0,\dots,a_{p+q}} := \varphi_{a_0,\dots,a_p} \circ_{a_p} \psi_{a_{p},\dots,a_{p+q}}
$,
where $\circ_{a_p}$ is the composition in $\cat{A}$ over $a_p$.

\begin{proposition}[$\text{\cite[Proposition~4.3]{E2} based on \cite{cartaneilenberg,bichon,shimizucoend}}$]\label{propcocomplex}	
	For any finite tensor category $\cat{C}$, there is a canonical equivalence $\rint_{X\in\Proj \cat{C}}    \cat{C}(X,X)   \simeq \cat{C}\left(I,\mathbb{A}^\bullet \right)$, where $\mathbb{A}^\bullet$ is an injective resolution of $\mathbb{A}$. In other words, there is an equivalence between the Hochschild cochain complex of $\cat{C}$ and the homotopy invariants of $\mathbb{A}$.
	For suitable models of the homotopy end and the injective resolution, respectively, the equivalence can be turned into an isomorphism.
\end{proposition}

For a finite tensor category $\cat{C}$, we denote by $Z(\cat{C})$ its \emph{Drinfeld center}, the \emph{braided} tensor category that consists of pairs of an object $X\in \cat{C}$ and a \emph{half braiding}, i.e.\ a natural isomorphism $X\otimes-\cong -\otimes X$ subject to coherence conditions, \cite[Section~7.13]{egno} for a textbook reference.
The Drinfeld center can be seen as the center of $\cat{C}$ as $E_1$-algebra  and is therefore an $E_2$-algebra, i.e.\ braided (and this braiding is actually the one that one can directly give based on the description of $Z(\cat{C})$ in terms of half braidings). 
It is also a finite tensor category, see \cite[Theorem~3.34]{etingofostrik} and \cite[Theorem~3.8]{shimizuunimodular}.
The forgetful functor $U:Z(\cat{C})\to \cat{C}$ is exact and therefore has a left adjoint $L:\cat{C}\to Z(\cat{C})$ and a right adjoint $R: \cat{C}\to Z(\cat{C})$. 
Since $U$ is strong monoidal, $L$ and $R$ are automatically oplax and lax monoidal, respectively, see \cite{bruguieresvirelizier,shimizuunimodular} for a more detailed account on the structure of these adjoint pairs and the (co)monads they give rise to. As a consequence, the images 
$\coalg:=LI$
and
$\alg:=RI$
of the monoidal unit $I \in \cat{C}$  
are a coalgebra and an algebra in $Z(\cat{C})$, respectively.  
The underlying objects in $\cat{C}$
\begin{align} U\coalg =\mathbb{F}=\int^{X\in\cat{C}} X^\vee \otimes X\ , \quad U\alg=\mathbb{A}=\int_{X\in\cat{C}} X\otimes X^\vee \label{eqnUcenter} \end{align}
are the canonical coend and the canonical end, respectively; their half braiding is often called the \emph{non-crossing half braiding}.

As a consequence of~\eqref{eqnUcenter},
$\mathbb{F}$ is coalgebra and $\mathbb{A}$ an algebra in $\cat{C}$.
In fact, the coalgebra structure $\delta : \mathbb{F}\to\mathbb{F}\otimes\mathbb{F}$ on $\mathbb{F}$ is induced by the coevaluations
$
X^\vee \otimes X\ra{X^\vee \otimes b_X \otimes X} X^\vee \otimes X\otimes X^\vee \otimes X $
while, dually, $\mathbb{A}$ inherits an algebra structure  $\gamma : \mathbb{A}\otimes \mathbb{A}\to\mathbb{A}$ on $\mathbb{A}$  induced by the evaluations 
$
X \otimes X^\vee \otimes X\otimes X^\vee \ra{X\otimes d_X\otimes X^\vee} X\otimes X^\vee$.

The left and the right adjoint to the forgetful functor $U$ are intimately related to the distinguished invertible object $D$ of $\cat{C}$  introduced in~\cite{eno-d}. More precisely, there exist canonical natural isomorphisms
$
L(D\otimes-) \cong R \cong L(-\otimes D)$ and $ R(D^{-1}\otimes -)\cong L \cong R (-\otimes D^{-1})   
$ \cite[Lemma~4.7 \& Theorem~4.10]{shimizuunimodular}.
A finite tensor category is called \emph{unimodular} if $D\cong I$. 
By the relation of $L$ and $R$, this is the case if and only if $L\cong R$, see again \cite[Theorem~4.10]{shimizuunimodular}.
This allows us to define the Radford map (we justify the terminology through the comments after Proposition~\ref{propSmatrix}):

\begin{definition}[Radford map]\label{defradfordmap}
	Let $\cat{C}$ be a finite tensor category. If $\cat{C}$ is unimodular
	and if a trivialization of $D$ is chosen (the possible choices form a $k^\times$-torsor), we define the $I$-component of the resulting natural isomorphism $UR\cong UL$ 
	as the \emph{Radford map}
	and denote it by
	$
	\Psi : \mathbb{A}=UR(I)\ra{\cong}UL(I)= \mathbb{F} $.
\end{definition}

A final ingredient is needed to describe the effect of the $S$-transformation explicitly:
For any finite braided tensor category $\cat{C}$, the maps $X^\vee \otimes X\to Y \otimes Y^\vee$ given by the double braiding
\begin{equation}\tikzfig{drinfeld-s}
\end{equation}\normalsize
induce a map $\drinfeld:\mathbb{F}\to\mathbb{A}$,
the so-called \emph{Drinfeld map} \cite{drinfeld}.
The Drinfeld map can be used to characterize non-degeneracy of the braiding (the definition was given on page~\pageref{nondegenpage} in the introduction):
By the main result of \cite{shimizumodular} a braided finite  tensor category is non-degenerate  if and only if its Drinfeld map is an isomorphism.
We may now explicitly describe the effect of the $S$-transformation on the differential 
graded conformal block of the torus as follows:

\begin{proposition}\label{propSmatrix}
	Let $\cat{C}$ be a modular category.
	After identification of the Hochschild complex  
	with $\cat{C}(I,\pF)$ for a projective resolution $\pF$ of $\mathbb{F}$ (Proposition~\ref{corresocoend2}),
	the mapping class group element
	$S=\begin{pmatrix} 0 & -1 \\ 1 &\phantom{-}0\end{pmatrix}$
	acts by the equivalence
	$
	\cat{C}(I,\pF) \ra{\drinfeld_\bullet}  	\cat{C}(I,\pA) \ra{\Psi_\bullet} \cat{C}(I,\pF)$, 
	where the first arrow is induced by the Drinfeld map 
	and the second arrow by the Radford map.
\end{proposition}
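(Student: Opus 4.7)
The plan is to reduce the claim to the classical Lyubashenko formula for the $S$-transformation on the vector-space-valued modular functor, and then lift that identification to the chain complex level using the homotopy coherent Lego Teichmüller game underlying $\mfc$.

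First, I would recall how the $\SL(2,\mathbb{Z})$-action on $\mfc(\mathbb{T}^2)$ is realized via markings: any colored marking $\Gamma$ of the torus produces an equivalence $\block^{\mathbb{T}^2,\Gamma}_\cat{C}\simeq \mfc(\mathbb{T}^2)$ as in \eqref{eqnmarkedblockequiv}, and a mapping class $\phi$ acts by the change-of-marking equivalence between $\Gamma$ and $\phi\cdot \Gamma$ inside the contractible $\infty$-groupoid $\Mhat(\mathbb{T}^2)$. To pin down the $S$-action on the model $\cat{C}(I,\pF)$, it therefore suffices to compare the marked block for the cylinder-type marking \eqref{eqnthemarking} with the marked block for its $S$-image and identify the composite with the claimed map $\Psi_\bullet \circ \drinfeld_\bullet$.

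Second, I would rewrite each marked block through Proposition~\ref{corresocoend2}: starting from the marking \eqref{eqnthemarking}, the marked block computes the Hochschild complex in the form $\cat{C}(I, \flint^{X\in\Proj\cat{C}} X^\vee \otimes X) \simeq \cat{C}(I,\pF)$. After applying $S$, the role of the gluing circle is swapped with the transverse cycle, with an orientation reversal on one factor; tracking this through the simplicial object \eqref{eqnresF} replaces $X^\vee \otimes X$ by $X \otimes X^\vee$ and produces the expression $\cat{C}(I,\flint^{X\in\Proj\cat{C}} X \otimes X^\vee) \simeq \cat{C}(I, \pA)$. The change-of-marking equivalence between the two presentations is induced at the level of the integrands by the double braiding on the gluing circle, i.e., by the Drinfeld map \eqref{eqndrinfeldmap}; at the chain level this is $\drinfeld_\bullet:\cat{C}(I,\pF)\to\cat{C}(I,\pA)$.

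Third, since $S$ sends the torus back to the torus, the target marking is again of cylinder type and one must compare the $\pA$-presentation to the original $\pF$-presentation. This is where modularity enters: by Section~\ref{seccenterunimodular}, modularity implies unimodularity, so the chosen trivialization of $D$ yields, via Theorem~\ref{thmunimodular}, the Radford isomorphism $\Psi:\mathbb{A}\to\mathbb{F}$ of Definition~\ref{defradfordmap}. The second change of marking is identified with $\Psi$, uniquely determined by naturality and the non-crossing half-braidings \eqref{firsthalfbraiding}--\eqref{firsthalfbraiding2} on the gluing circle, and lifts to $\Psi_\bullet$ on projective resolutions. As a consistency check, $\Psi\circ \drinfeld:\mathbb{F}\to\mathbb{F}$ recovers in zeroth cohomology Lyubashenko's classical formula for the $S$-action on his vector-space-valued modular functor, which is the expected answer because $\mfc$ reduces in zeroth homology to that functor.

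The main obstacle I expect is pinning down the two change-of-marking equivalences as exactly $\drinfeld$ and $\Psi$ rather than as some other chain representative of the same homotopy class of maps between projective resolutions. This requires a careful diagrammatic bookkeeping of the half-braidings appearing on the gluing circle as one deforms the marking through the zigzag realizing $S$, and a verification that the resulting combination of double braidings and dualities assembles to $\drinfeld$ in the first step and to the $Z(\cat{C})$-isomorphism $R I \cong L I$ underlying $\Psi$ in the second. Once these identifications are carried out at the level of the coend/end integrands in $\cat{C}$, the passage to projective resolutions is automatic and produces the stated chain-level formula, unique up to a canonical chain homotopy.
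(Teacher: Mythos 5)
Your overall shape is right --- reduce to change-of-marking equivalences in $\Mhat(\mathbb{T}^2)$ and identify the composite with $\Psi_\bullet\circ\drinfeld_\bullet$ --- but the step you flag as ``the main obstacle'' is in fact the entire mathematical content of the proposition, and your proposal does not supply it. The homotopy coherent Lego Teichm\"uller game does not hand you two separate moves realized by $\drinfeld$ and $\Psi$ respectively; what it assigns to the zigzag from $\Gamma$ to $S.\Gamma$ is, essentially by the construction in \cite{dmf}, Lyubashenko's automorphism $\mathbb{S}_\bullet$ (the map attached to the change to a transversal uncolored cut). The real work is then the purely algebraic identity $\mathbb{S}=\Psi\circ\drinfeld$, which the paper isolates as Lemma~\ref{Slemma}. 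That lemma is not a formality: it uses the Frobenius algebra structure on $\alg$ coming from unimodularity, its symmetry (Proposition~\ref{propsymfrobalg}), the fact that $\Psi$ is a map of left and right $\mathbb{A}$-modules, the identification of the two-sided integral $\Lambda=\Psi\circ\eta$, the expression of $\Psi$ through $\Lambda$ and the module action in~\eqref{eqnpsilambda}, and a genuine graphical computation against the definition $\mathbb{S}=(\varepsilon\otimes\mathbb{F})\circ\cat{O}\circ(\mathbb{F}\otimes\Lambda)$. Your ``consistency check'' that $\Psi\circ\drinfeld$ recovers Lyubashenko's formula is precisely the statement that has to be proven, not evidence for it.

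Two further points. First, your claim that the second change of marking is ``uniquely determined by naturality and the non-crossing half-braidings'' is not a characterization of $\Psi$ available in this setup: $\Psi$ is defined abstractly via Shimizu's isomorphism $L\cong R$, and extracting anything computable from it is exactly what Lemma~\ref{Slemma} (and later Lemma~\ref{lemmapsiomega}) must do. Second, your bookkeeping of the integrands is off: replacing $X^\vee\otimes X$ by $X\otimes X^\vee$ in the simplicial object~\eqref{eqnresF} still yields a resolution of a \emph{coend} (isomorphic to $\mathbb{F}$ by pivotality), not of the end $\mathbb{A}$; the Drinfeld map $\drinfeld:\mathbb{F}\to\mathbb{A}$ changes a coend into an end via the double braiding and is not realized by a mere relabeling of the integrand. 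So the proposal as written has a genuine gap at its central step.
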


By \cite[Proposition~4.5]{eno-d} any modular category is unimodular, and we will tacitly assume in the sequel that an isomorphism $D\cong I$ has been fixed for any modular category (therefore, the Radford isomorphism is defined  here).
It is standard in the theory of modular functors that the $S$-transformation acts by a composition of (some form of the) Drinfeld and the Radford map, see e.g.\ \cite[Section~3]{gainutdinovtipunin} and also \cite[Remark~2.14]{gainutdinovrunkel}. For a lot of constructions, this holds by definition. For the construction of the \emph{differential graded} modular functor via the homotopy coherent Lego Teichmüller game from \cite{dmf} it requires 
a  proof, especially because the
`Radford map' from Definition~\ref{defradfordmap} was named without any comparison to other definitions.

\begin{proof}[{\slshape Proof of Proposition~\ref{propSmatrix}.}]
	From the construction of \cite[Theorem~5.4]{dmf} and its proof, it follows that $S$ acts on $\cat{C}(I,\mathbb{F}_\bullet)$
	through the automorphism $\mathbb{S}_\bullet:\cat{C}(I,\mathbb{F}_\bullet)\to \cat{C}(I,\mathbb{F}_\bullet)$ induced by the automorphism $\mathbb{S}:\mathbb{F}\to\mathbb{F}$ from \cite[Definition~6.3]{lyuba}. It remains to prove $\mathbb{S}=\Psi \circ \drinfeld$: The automorphism $\mathbb{S}:\mathbb{F}\to\mathbb{F}$
	\cite[Definition~6.3]{lyuba}
	in the description of \cite[Eq.~(2.16)]{fuchsschweigertstigner}
	is given by
	$
	\mathbb{S} := (\varepsilon\otimes \mathbb{F}) \circ \cat{O} \circ (\mathbb{F} \otimes \Lambda)$, 
	where the   map $\cat{O}:\mathbb{F}\otimes\mathbb{F}\to\mathbb{F}\otimes\mathbb{F}$ is induced by the double braiding, more precisely by the maps
	\begin{align} X^\vee \otimes X \otimes Y^\vee  \otimes Y \ra{X^\vee \otimes (c_{Y^\vee,X}\circ c_{X,Y^\vee}) \otimes Y}  X^\vee \otimes X \otimes Y^\vee  \otimes Y \ , \end{align} $\varepsilon:\mathbb{F}\to I$ is the counit and $\Lambda:I\to\mathbb{F}$ is the two-sided integral of $\mathbb{F}$ as Hopf algebra \cite[Theorem~6.9]{shimizuunimodular}. 
	By the universal property of the coend $\mathbb{F}$
	the equality
	\begin{equation}\tikzfig{proofS2-s}\qquad \text{implies}\qquad 
	\tikzfig{proofS1-s} \ \stackrel{(*)}{=} \ \tikzfig{proofS1b-s} \ . 
	\label{proofS2eqn2}
	\end{equation}
	Now we precompose with the integral $\Lambda$ in the respective right slot on the left and right hand side of~$(*)$.
	On the left hand side, we are then left with $\Psi \circ \drinfeld$ 
	thanks to
	\begin{equation}	  \tikzfig{pl1-s2}   \ = \  \Psi    \ ,  
	\end{equation}
	see~\cite[Section~7]{E2}.
	On the right hand side, we find $\mathbb{S}$.
	This shows $\mathbb{S}=\Psi\circ\drinfeld$ and concludes the proof.
\end{proof}

\begin{example}\label{exhopfalgebra}
	Let $A$ be a ribbon factorizable Hopf algebra. 
	Then the category of finite-dimensional $A$-modules is modular (see~\cite{ntv} for the semisimple case and e.g.~\cite[Section~2.3]{svea2} for the non-semisimple case).
	By \cite[Theorem~7.4.13]{kl} the Lyubashenko coend $\mathbb{F}$ is isomorphic to $A^*_\text{coadj}$, the dual of $A$ with coadjoint $A$-action
	$
	A \otimes A^* \to A^* $ sending $a \otimes \alpha \in A \otimes A^* $ to the linear form on $A$ that sends $b\in A$ to $\alpha\left(      S(a'ba''    )\right)$, 
	where $\Delta a = a'\otimes a''$ is the Sweedler notation for the coproduct and $S$ is the antipode. 
	The Hochschild complex of $A$ is  equivalent to $\Hom_A(k,   {  A^*_\text{coadj}  } _\bullet  )$,
	where $ {  A^*_\text{coadj}  } _\bullet\to    A^*_\text{coadj}  $ is a projective resolution.
	If $A$ is the Drinfeld double $D(G)$ of a finite group $G$ (the category will be non-semisimple if the characteristic of $k$ divides $|G|$), 
	the complex $\Hom_A(k,   {  A^*_\text{coadj}  } _\bullet  )$ is equivalent to $C_*(\PBun_G(\mathbb{T}^2);k)$, the $k$-chains on the groupoid $\PBun_G(\mathbb{T}^2)$ of $G$-bundles over the torus \cite[Lemma~3.2]{dva}, and the mapping class group action is the obvious geometric one. For an arbitrary ribbon factorizable Hopf algebra $A$, the mapping class group action comes from an action of the braid group $B_3$ on three strands on $A^*_\text{coadj}$ \cite{svea}, i.e.\ it descends along the epimorphism $B_3 \to \SL(2,\mathbb{Z})$. This remains even true for arbitrary modular categories \cite{dva}.
\end{example}

\section{The differential graded Verlinde algebra}
In order to formulate the differential graded Verlinde formula,
the differential graded Verlinde algebra needs to be constructed as an $E_2$-algebra.
For the Hochschild chains (where the $E_2$-structure will turn out to be less complicated than on the cochains), this is already 
accomplished
in previous work by applying the functoriality of the Hochschild chains to the braided monoidal product:

\begin{proposition}[$\text{\cite[Proposition~3.11]{dva}}$]\label{propdgva}
	The Hochschild chain complex of a braided finite tensor category carries a non-unital $E_2$-structure
	induced by the braided monoidal product.
\end{proposition}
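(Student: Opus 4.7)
The plan is to reduce the proposition to Theorem~\ref{thmE2derivedhom}, which produces $E_2$-algebras from an algebra in a finite tensor category equipped with a lift to a braided commutative algebra in the Drinfeld center. Assembling the appropriate input in the chain case is mostly routine given the results of Section~3.

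First I would identify the Hochschild chain complex with a derived morphism space. By Proposition~\ref{corresocoend2}~(i) one has $\lint^{X\in\Proj\cat{C}} \cat{C}(X,X) \simeq \cat{C}(I,\mathbb{F}_\bullet)$ for any projective resolution $\mathbb{F}_\bullet$ of the canonical coend $\mathbb{F}$. Since projective and injective objects coincide in a finite tensor category, this derived hom can equivalently be written as $\cat{C}(I,\mathbb{F}^\bullet)$ using an injective resolution $\mathbb{F}^\bullet$ of $\mathbb{F}$, which is the form on which Theorem~\ref{thmE2derivedhom} operates directly.

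Next I would lift $\mathbb{F}$ to a braided commutative algebra in $Z(\cat{C})$. Proposition~\ref{propoFmult} supplies an associative multiplication $\mu : \mathbb{F} \otimes \mathbb{F} \to \mathbb{F}$ built from the braiding, and the dolphin half-braiding~\eqref{eqndolphin} promotes $\mathbb{F}$ to an object $\mathbb{F}\dolph \in Z(\cat{C})$; a short graphical-calculus check shows that $\mu$ lifts to a braided commutative product on $\mathbb{F}\dolph$. Applying Theorem~\ref{thmE2derivedhom} to the pair $(\mathbb{F},\mathbb{F}\dolph)$ then endows $\cat{C}(I,\mathbb{F}^\bullet)$ with an $E_2$-structure, and transporting along the equivalence of the previous paragraph gives the desired $E_2$-multiplication on the Hochschild chain complex. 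The resulting object is precisely the dolphin algebra $\mathfrak{A}_\cat{C}\dolph$ of Definition~\ref{defdolphinalgebra} read on chains rather than cochains, and its product is manifestly induced by the braided monoidal product of $\cat{C}$.

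The main subtlety concerns the non-unital qualifier: the route above actually produces a unital $E_2$-algebra, with unit given by the composite of the unit $I \to \mathbb{F}$ and the coaugmentation $\mathbb{F} \to \mathbb{F}^\bullet$, so one simply forgets the unit to recover the formulation of \cite{dva}. The hard part, if one insisted on a self-contained argument staying closer to the simplicial model~\eqref{eqnresF} of the homotopy coend rather than passing to an injective resolution, would be to construct an explicit chain-level action of a model of the little disks operad on that model and to verify all the braided coherences; the acyclic braided operad $J_\somealg$ of Proposition~\ref{propJ} together with the symmetrization argument of Proposition~\ref{propE2onhom} is exactly what bypasses this difficulty.
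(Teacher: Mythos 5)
There is a genuine gap, and it sits in your very first reduction. You claim that because projectives and injectives coincide in a finite tensor category, the complex $\cat{C}(I,\mathbb{F}_\bullet)$ built from a \emph{projective} resolution $\mathbb{F}_\bullet\to\mathbb{F}$ may be replaced by $\cat{C}(I,\mathbb{F}^\bullet)$ for an \emph{injective} resolution $\mathbb{F}\to\mathbb{F}^\bullet$. This is false: the two resolutions are different complexes sitting in opposite degrees, and applying $\cat{C}(I,-)$ yields the Hochschild chain complex in the first case and a model for $\Ext^*_\cat{C}(I,\mathbb{F})$ in the second. The paper warns against exactly this conflation in the remark following Proposition~\ref{corresocoend2}: $\cat{C}(I,\mathbb{F}_\bullet)$ is \emph{not} a derived hom and is not canonically equivalent to one. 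What your construction actually produces is the dolphin algebra $\mathfrak{A}_\cat{C}\dolph=\cat{C}(I,\mathbb{F}^\bullet)$ of Definition~\ref{defdolphinalgebra}, i.e.\ (after the Radford/Drinfeld identifications) the differential graded Verlinde algebra on the Hochschild \emph{cochain} complex of Theorem~\ref{thmverlindecochain} --- not the structure on Hochschild chains asserted in Proposition~\ref{propdgva}. The two cannot agree: the cochain-side product is unital with a generally nontrivial Gerstenhaber bracket, while the chain-side product is non-unital and, up to homotopy, concentrated in degree zero; moreover, dualizing would turn a product into a coproduct, as the introduction points out.

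Note also that the paper does not prove this proposition here at all --- it is imported verbatim from \cite[Proposition~3.11]{dva}, where the non-unital $E_2$-structure is built directly on the homotopy coend model by lifting the multiplication $\mu:\mathbb{F}\otimes\mathbb{F}\to\mathbb{F}$ of Proposition~\ref{propoFmult} to projective resolutions. If you wanted to recover it from the operadic machinery of this paper, you would have to dualize Proposition~\ref{propJ} and Theorem~\ref{thmE2derivedhom} wholesale (projective resolutions, the projective model structure, cofibrancy in place of fibrancy), not merely swap the resolution. The non-unitality is then forced for a concrete reason your last paragraph glosses over: the unit $I\to\mathbb{F}$ does not lift to a degree-zero cycle of a projective resolution of $\mathbb{F}$ unless $I$ itself is projective, which fails precisely in the non-semisimple case.
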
	

For the Hochschild cochains, 
the situation is more difficult, and we will use the following  construction principle for $E_2$-algebras from 
homotopy invariants of a braided commutative algebra in a braided finite tensor category
that we have developed in \cite{E2} using the homotopy theory of braided operads.
Recall that a \emph{braided commutative algebra} $\somealg$ in a braided finite tensor category $\cat{B}$ is an algebra with multiplication $\mu:\somealg\otimes\somealg\to\somealg$ satisfying $\mu \circ c_{\somealg,\somealg}=\mu$.

\begin{theorem}[$\text{\cite[Theorem~3.6]{E2}}$]\label{thmE2derivedhom}
	Let $\mathbb{T} \in \cat{C}$ be an algebra in a finite tensor category $\cat{C}$
	together with a lift to a braided commutative algebra $\somealg \in Z(\cat{C})$ in the Drinfeld center.
	Then the multiplication of $\mathbb{T}$ 
	and the half braiding of $\mathbb{T}$
	induce the structure of an $E_2$-algebra on the cochain complex $\cat{C}(I,\mathbb{T}^\bullet)$ of homotopy invariants of $\mathbb{T}$ (here $\mathbb{T}^\bullet$ is an injective resolution of $\mathbb{T}$).
	The construction is natural in $\mathbb{T}$.
	If $\cat{C}$ is unimodular and pivotal and $\somealg$ has trivial balancing in $Z(\cat{C})$, the $E_2$-algebra structure on $\cat{C}(I,\mathbb{T}^\bullet)$ canonically extends to a framed $E_2$-algebra structure.
\end{theorem}

In order to apply this result, we need a braided commutative algebra. To this end, recall the following result: 

\begin{proposition}[$\text{Lyubashenko \cite[Section~2]{lyuba}}$]\label{propoFmult}
	Let $\cat{C}$ be a  braided finite tensor category.
	Then the maps
	$
	X^\vee \otimes X \otimes Y^\vee \otimes Y \ra{  X^\vee \otimes c_{X,Y^\vee\otimes Y}   } (Y\otimes X)^\vee \otimes Y\otimes X
	$
	induce a 	map $\mu : \mathbb{F}\otimes\mathbb{F}\to\mathbb{F}$ that endows the canonical coend $\mathbb{F}$ with the structure of a unital associative algebra in $\cat{C}$.
\end{proposition}

The maps
\begin{align}\tikzfig{dolphin-s} \ : \ X^\vee \otimes X \otimes Y \to Y\otimes X^\vee \otimes X\label{eqndolphin}
\end{align}
induce a half braiding for $\mathbb{F}$ that is often referred to as \emph{dolphin half braiding} and in other contexts as \emph{field goal transform}. 
We denote this lift of $\mathbb{F}$ to the Drinfeld center by
$\mathbb{F}\dolph$. 
It is proven in \cite[Definition~4 \& Proposition~5]{neuchlschauenburg}
that the multiplication
$\mu : \mathbb{F}\otimes\mathbb{F}\to\mathbb{F}$ from Proposition~\ref{propoFmult} lifts to a braided commutative multiplication on 
$\mathbb{F}\dolph\in Z(\cat{C})$, see also 
\cite[Lemma~2.8]{cardycase}. Therefore, we obtain an $E_2$-structure on the homotopy invariants $\cat{C}(I,\mathbb{F}^\bullet)$ by Theorem~\ref{thmE2derivedhom}, see also \cite[Example~3.8]{E2}.

\begin{definition}\label{defdolphinalgebra}
	For any braided finite tensor category $\cat{C}$, 
	we denote $\cat{C}(I,\mathbb{F}^\bullet)$ with its $E_2$-structure coming from 
	the multiplication $\mu : \mathbb{F}\otimes\mathbb{F}\to\mathbb{F}$ from Proposition~\ref{propoFmult} 
	and the dolphin half braiding of $\mathbb{F}$ 
	by $\mathfrak{A}_\cat{C}\dolph$ and refer to this $E_2$-algebra as the \emph{dolphin algebra} of $\cat{C}$.
\end{definition}

\begin{remark}\label{remrelclassfunction}
	Thanks to $\cat{C}(I,\mathbb{F})\cong Z(\cat{C})(\coalg,\coalg)$,
	the vector space $\cat{C}(I,\mathbb{F})$ becomes an algebra, the \emph{algebra $\catf{CF}(\cat{C})$ of class functions of $\cat{C}$}. 
	But the vector space $\cat{C}(I,\mathbb{F})$ also coincides with $H^0(\mathfrak{A}_\cat{C}\dolph)$, and we can extract from 
	\cite[Proposition~3.13]{shimizucf} that $H^0(\mathfrak{A}_\cat{C}\dolph)= \catf{CF}(\cat{C})$ as algebras.
\end{remark}

With these preparations,
obtaining the differential graded Verlinde algebra on the Hochschild 
\emph{cochain} complex is relatively straightforward: 

\begin{theorem}[Verlinde algebra on the Hochschild cochain complex]\label{thmverlindecochain}
	Let $\cat{C}$ be a unimodular braided finite tensor category with a fixed trivialization $D\cong I$ of the distinguished invertible object.
	Then the Hochschild cochain complex $\rint_{X\in \Proj \cat{C}} \cat{C}(X,X)$ inherits from its braided monoidal product the structure of an $E_2$-algebra $\bspace{\rint_{X\in \Proj \cat{C}} \cat{C}(X,X)}{\otimes}$ whose multiplication we denote again by $\otimes$. 
\end{theorem}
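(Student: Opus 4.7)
The plan is to specialize Theorem~\ref{thmE2derivedhom}: it suffices to exhibit an algebra $\mathbb{T}\in\cat{C}$ whose derived homotopy invariants compute the Hochschild cochain complex, together with a lift of $\mathbb{T}$ to a braided commutative algebra in $Z(\cat{C})$ whose product descends from the braided monoidal product of $\cat{C}$.

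The natural candidate is the canonical coend $\mathbb{F}$ equipped with the Lyubashenko multiplication (Proposition~\ref{propoFmult}) and the dolphin half braiding~\eqref{eqndolphin}; together these make $\mathbb{F}\dolph$ a braided commutative algebra in $Z(\cat{C})$ by \cite[Lemma~2.8]{cardycase}. Applying Theorem~\ref{thmE2derivedhom} to $\mathbb{F}\dolph$ immediately produces the dolphin algebra $\mathfrak{A}_\cat{C}\dolph=\cat{C}(I,\mathbb{F}^\bullet)$ of Definition~\ref{defdolphinalgebra}. Proposition~\ref{propcocomplex} identifies the Hochschild cochain complex, however, with $\cat{C}(I,\mathbb{A}^\bullet)$, i.e.\ with the derived homotopy invariants of $\mathbb{A}$ rather than of $\mathbb{F}$, so I still need to transport the $E_2$-structure along a suitable identification of $\mathbb{A}$ with $\mathbb{F}$. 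This is precisely the point where unimodularity enters.

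The chosen trivialization $D\cong I$ furnishes, via Theorem~\ref{thmunimodular} and Definition~\ref{defradfordmap}, the Radford map $\Psi:\mathbb{A}\ra{\cong}\mathbb{F}$ in $\cat{C}$. Pulling back the half braiding and the Lyubashenko multiplication of $\mathbb{F}\dolph$ along $\Psi$ endows $\mathbb{A}$ with a new lift to a braided commutative algebra in $Z(\cat{C})$, distinct from the canonical lift $\alg$. Theorem~\ref{thmE2derivedhom} applied to this new lift then yields the desired $E_2$-structure on $\cat{C}(I,\mathbb{A}^\bullet)\simeq \rint_{X\in \Proj \cat{C}} \cat{C}(X,X)$. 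By construction, its multiplication is inherited from the braided monoidal product of $\cat{C}$.

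The main difficulty I anticipate is of a bookkeeping nature: one must keep careful track of two different lifts of $\mathbb{A}$ and of $\mathbb{F}$ to $Z(\cat{C})$, because the half braiding transported along $\Psi$ from the dolphin one differs from the canonical non-crossing half braiding carried by $\alg$ and $\coalg$. Once this distinction is held firmly in mind, each remaining step is a direct invocation of the cited results, and no substantive new computation is required.
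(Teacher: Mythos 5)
Your proposal is correct and follows essentially the same route as the paper: both proofs combine the identification of the Hochschild cochain complex with $\cat{C}(I,\mathbb{A}^\bullet)$ (Proposition~\ref{propcocomplex}), the Radford isomorphism $\Psi:\mathbb{A}\to\mathbb{F}$ supplied by unimodularity and the chosen trivialization $D\cong I$, and the dolphin algebra $\mathfrak{A}_\cat{C}\dolph$ produced by Theorem~\ref{thmE2derivedhom}. The only immaterial difference is that you transport the braided commutative algebra structure along $\Psi$ at the level of objects in $Z(\cat{C})$ before invoking Theorem~\ref{thmE2derivedhom}, whereas the paper applies Theorem~\ref{thmE2derivedhom} to $\mathbb{F}\dolph$ first and then transports the resulting $E_2$-structure along the chain-level equivalence $\Psi^\bullet$; the two transports yield equivalent $E_2$-algebras.
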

We refer to this $E_2$-algebra as the \emph{differential graded Verlinde algebra on the Hochschild cochain complex of $\cat{C}$}.

\begin{proof}
	By Proposition~\ref{propcocomplex},
	we obtain an equivalence of chain complexes
	\begin{align}
	\rint_{X\in \Proj \cat{C}} \cat{C}(X,X)\simeq \cat{C}(I,\mathbb{A}^\bullet)\ra{\Psi^\bullet} \cat{C}(I,\mathbb{F}^\bullet) \ ,    \label{eqnpsidolphin}
	\end{align}
	which, with suitable models for the homotopy end and a resolution of $\mathbb{A}$,  is actually an isomorphism.
	As a consequence, there is, up to homotopy, a unique $E_2$-algebra $\bspace{\rint_{X\in \Proj \cat{C}} \cat{C}(X,X)}{\otimes}$
	such that
	$
	\bspace{\rint_{X\in \Proj \cat{C}} \cat{C}(X,X)}{\otimes}    \ra{\eqref{eqnpsidolphin}} \mathfrak{A}_\cat{C}\dolph\label{eqndolphinequiv}
	$
	is an equivalence (or depending on the model even an isomorphism) of $E_2$-algebras.
\end{proof}

We will later need a technical result on the dolphin algebra from Definition~\ref{defdolphinalgebra}.
This will rely on our construction of $E_2$-algebras (Theorem~\ref{thmE2derivedhom}), but also on the following  well-known result:

\begin{proposition}
	\label{propdrinfeldmapmorphismus}
	The Drinfeld map $\drinfeld:\mathbb{F}\to\mathbb{A}$ of a finite braided tensor category is a map of algebras $(\mathbb{F},\mu)\to(\mathbb{A},\gamma)$. Moreover, it lifts to a morphism $\mathbb{F}\dolph\to \alg$ of algebras in $Z(\cat{C})$, where $\mathbb{F}\dolph\in Z(\cat{C})$ is the canonical coend of $\cat{C}$ equipped with the dolphin half braiding and $\alg$ is the canonical algebra of $Z(\cat{C})$. 
\end{proposition}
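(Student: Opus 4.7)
The plan is to verify the two assertions in order, using the universal properties of $\mathbb{F}$ and $\mathbb{A}$ together with the graphical calculus introduced on page~\pageref{congraphcalc}. Both statements will be diagrammatic identities in the braided monoidal category $\cat{C}$, so the strategy is to reduce each equality of morphisms between (co)ends to an equality of the inducing dinatural families and then massage the resulting string diagrams using only naturality of the braiding, the zigzag identities and the defining property of $c_{\mathbb{A},Y}$ recalled in~\eqref{firsthalfbraiding2}.

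First I would address the algebra morphism property $\drinfeld \circ \mu = \gamma \circ (\drinfeld \otimes \drinfeld)$. Both sides are maps $\mathbb{F}\otimes \mathbb{F}\to \mathbb{A}$, and by the universal property of the tensor product of the coends (exploiting the exactness of $-\otimes-$ in a finite tensor category), it suffices to check the identity after precomposition with the dinatural structure maps $X^\vee\otimes X\otimes Y^\vee\otimes Y\to \mathbb{F}\otimes\mathbb{F}$ for arbitrary $X,Y\in \cat{C}$. On the left hand side one first applies the dinatural family inducing $\mu$ (which inserts the braiding $c_{X,Y^\vee\otimes Y}$) and then the dinatural family for $\drinfeld$ (which closes with a double braiding). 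On the right hand side one applies $\drinfeld$ in each slot, giving a morphism into $X\otimes X^\vee \otimes Y\otimes Y^\vee$, and then composes with the dinatural family inducing $\gamma$ from~\eqref{eqngamma}, which pairs the two inner factors. A direct string-diagram computation, using naturality of the braiding to slide $c_{X,Y}$ past the evaluations and coevaluations, shows that both diagrams coincide. This step is essentially a careful bookkeeping of crossings.

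Second, I would lift the statement to $Z(\cat{C})$ by checking that $\drinfeld$ intertwines the dolphin half braiding of $\mathbb{F}$ with the non-crossing half braiding of $\mathbb{A}$: for every $Y\in \cat{C}$,
\[
(Y\otimes \drinfeld)\circ c^{\dolph}_{\mathbb{F},Y} \;=\; c_{\mathbb{A},Y}\circ(\drinfeld \otimes Y)\,.
\]
Again by the universal property of $\mathbb{F}$ as coend, both sides may be tested against the dinatural family $X^\vee\otimes X\otimes Y\to \mathbb{F}\otimes Y$. Inserting the explicit description of $c^{\dolph}_{\mathbb{F},Y}$ from~\eqref{eqndolphin} on the left, and using the description of $c_{\mathbb{A},Y}$ from~\eqref{firsthalfbraiding2} together with the dinatural family for $\drinfeld$ on the right, the identity becomes a planar rewriting of two string diagrams that differ only by naturality moves of the braiding through the double braiding in the definition of $\drinfeld$. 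Once this half-braiding compatibility is established, the lift of $\drinfeld$ to a morphism $\mathbb{F}\dolph\to\alg$ in $Z(\cat{C})$ is automatic, and its compatibility with the multiplications in $Z(\cat{C})$ follows from the first step because the forgetful functor $U:Z(\cat{C})\to\cat{C}$ is faithful and strong monoidal.

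The main obstacle is the graphical bookkeeping in the first step: the braiding $c_{X,Y^\vee\otimes Y}$ hidden inside $\mu$ interacts nontrivially with the double braiding that defines $\drinfeld$, so one has to be attentive to the order in which strands pass over and under one another. Once the two key identities are reduced to their dinatural inducing data, however, they become standard braided-categorical manipulations, and the whole argument is a routine application of the graphical calculus.
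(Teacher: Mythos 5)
Your proposal is correct and follows essentially the same route as the paper: reduce both identities to the dinatural components via the universal properties of the (co)ends and then verify them by a string-diagram computation, with the lift to $Z(\cat{C})$ obtained from the half-braiding compatibility plus faithfulness and strong monoidality of $U$. The only point the paper makes more explicit is that, since the target $\mathbb{A}$ is an end, one must also postcompose with the projection to an arbitrary $Z$-component $Z\otimes Z^\vee$ (not just the component matching the source index) before comparing the diagrams.
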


This statement, at least in the Hopf algebraic case, goes back to Drinfeld \cite{drinfeld}. 
If $\cat{C}$ is modular, then Proposition~\ref{propdrinfeldmapmorphismus} tells us that $\mathbb{F}$ and $\mathbb{A}$ are isomorphic as algebras. In this situation, a related statement taking also comultiplications into account is  given in  \cite[Theorem~5.16]{eilind}. For us, however, the version given in Proposition~\ref{propdrinfeldmapmorphismus} is sufficient.
Since Proposition~\ref{propdrinfeldmapmorphismus} is quite vital and since the argument behind it is very insightful, we give a short graphical proof.

\begin{proof}[{\slshape Proof of Proposition~\ref{propdrinfeldmapmorphismus}}]
	In order to prove $\drinfeld \circ \mu = \gamma \circ (\drinfeld \otimes\drinfeld)$, it suffices
	by the universal property of (co)ends
	to prove that for $X,Y,Z\in\cat{C}$ the restriction of the $Z$-component 
	to $X^\vee \otimes X\otimes Y^\vee \otimes Y$ of both maps agree. 
	We denote this component by $(\drinfeld \circ \mu)_{X,Y}^Z$ and $(\gamma \circ (\drinfeld \otimes\drinfeld))_{X,Y}^Z$, respectively. 
	Now the proof follows from the following computation in the graphical calculus:
	\begin{equation}\tikzfig{drinfeld-klein}
	\end{equation}\normalsize
	A similar computation proves that $\drinfeld$ is also a morphism in the Drinfeld center $Z(\cat{C})$.
\end{proof}

\begin{proposition}\label{propdolphdrinfeld}
	For any braided finite tensor category $\cat{C}$, the Drinfeld map $\drinfeld :\mathbb{F}\to\mathbb{A}$ induces a map of $E_2$-algebras
	$\drinfeld^\bullet: \mathfrak{A}_\cat{C}\dolph      \to             \cat{C}(I,\mathbb{A}^\bullet)$
	from the dolphin algebra to the homotopy invariants of the canonical end.
	This map is an equivalence (or isomorphism, for suitable models) if and only if the braiding of $\cat{C}$ is non-degenerate.
\end{proposition}

\begin{proof}
	It follows from Proposition~\ref{propdrinfeldmapmorphismus}	and the naturality statement contained in Theorem~\ref{thmE2derivedhom}
	that $\drinfeld^\bullet$ is a map of $E_2$-algebras.
	If $\cat{C}$ is non-degenerate, then $\drinfeld^\bullet$ is an equivalence because the Drinfeld map is an isomorphism \cite{shimizumodular}. If conversely $\drinfeld^\bullet$ is an equivalence, then, in particular, the map $\cat{C}(I,\mathbb{F})\to\cat{C}(I,\mathbb{A})$ induced by $\drinfeld$ in zeroth cohomology is an isomorphism. By applying again Shimizu's results \cite{shimizumodular} this suffices to ensure non-degeneracy.
\end{proof}

\section{The trace field theory and the block diagonal product on Hochschild chains}
As a final preparation for the formulation and proof of the differential graded Verlinde formula, we need to make contact to modified traces by means of topological conformal field theory:
Following \cite{fss},
the \emph{(right) Nakayama functor} $\naka :\cat{C}\to\cat{C}$
of a finite category $\cat{C}$
can be described in a Morita-invariant way
by
\begin{align} \naka X:= \int^{Y\in \cat{C}} \cat{C}(X,Y)^* \otimes Y \quad \text{for}\quad X\in\cat{C} \ .   \label{defeqnnaka}
\end{align}  
For a finite tensor category $\cat{C}$, there is  by \cite[Theorem~3.18]{fss}
a natural isomorphism 
$
\naka \cong D^{-1}\otimes -^{\vee\vee} 
$
turning $\naka$ into an equivalence from $\cat{C}$ as regular right $\cat{C}$-module category to $\cat{C}$ as regular $^{\vee\vee}$-twisted right $\cat{C}$-module category.
A trivialization of $\naka$ as right $\cat{C}$-module functor relative to a pivotal structure on $\cat{C}$
is referred to as \emph{symmetric Frobenius structure} in \cite{trace}; it
amounts to a pivotal structure on $\cat{C}$
and a trivialization $D\cong I$ of the distinguished invertible object.

\begin{theorem}[$\text{\cite[Theorem~3.6]{trace}, see also \cite{ss21}}$]\label{thmmtrace}
	For any finite tensor category $\cat{C}$ with symmetric Frobenius structure, the tensor ideal $\Proj \cat{C}$ canonically comes with a Calabi-Yau structure.
	The associated trace functions form a right modified trace on $\Proj \cat{C}$.
\end{theorem}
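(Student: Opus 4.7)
The plan is to build the Calabi--Yau structure on $\Proj\cat{C}$ directly from the Nakayama functor and then verify that the associated traces satisfy the cyclicity and partial trace axioms required of a right modified trace.

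First, I would use the coend description~\eqref{defeqnnaka} and the projectivity of $P\in\Proj\cat{C}$ to produce, for every $X\in\cat{C}$, a natural isomorphism
\begin{align}
\cat{C}(X,\naka P)\ \cong\ \cat{C}(P,X)^* \ ,
\end{align}
that is, the Nakayama functor is a relative Serre functor on the tensor ideal $\Proj\cat{C}$ (this is the standard Yoneda-style computation: $\cat{C}(X,\naka P)=\cat{C}(X,\int^Y \cat{C}(P,Y)^*\otimes Y)\cong \int^Y\cat{C}(P,Y)^*\otimes \cat{C}(X,Y)$, which by the coYoneda Lemma reduces to $\cat{C}(P,X)^*$ since $\cat{C}(P,-)$ is exact and representable by $P$). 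A symmetric Frobenius structure on $\cat{C}$ supplies a trivialization of $\naka$ as right $\cat{C}$-module functor relative to a chosen pivotal structure, so combining these two inputs produces, for every $X\in\cat{C}$ and every $P\in\Proj\cat{C}$, natural isomorphisms $\lambda_{P,X}:\cat{C}(X,P)\ra{\cong}\cat{C}(P,X)^*$. Restricting to $X\in\Proj\cat{C}$ yields the non-degenerate pairings $\cat{C}(X,P)\otimes\cat{C}(P,X)\to k$ that constitute a Calabi--Yau structure on $\Proj\cat{C}$; cyclicity (symmetry) of the pairing follows from the fact that the trivialization uses the \emph{pivotal} structure and that we work with the \emph{right} Nakayama functor, which is what ensures the swap of the two slots matches the appropriate rotation of pivotal evaluations.

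Second, I would extract the trace: for every $P\in\Proj\cat{C}$, set
\begin{align}
\tr_P\ :\ \cat{C}(P,P)\ra{\lambda_{P,P}}\cat{C}(P,P)^*\ \ra{\ \id_P\ }\ k \ .
\end{align}
By construction this $k$-linear family satisfies cyclicity $\tr_P(gf)=\tr_Q(fg)$ for $f:P\to Q$, $g:Q\to P$ between projectives, which is just the symmetry of the Calabi--Yau pairing.

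The hard part will be the partial trace property, i.e.\ the identity
\begin{align}
\tr_{P\otimes Y}(f)\ =\ \tr_P\bigl((\id_P\otimes \widetilde d_Y)\circ (f\otimes \id_{{^\vee\! Y}})\circ (\id_P\otimes \widetilde b_Y)\bigr)
\end{align}
for $P\in\Proj\cat{C}$, $Y\in\cat{C}$ and $f\in\cat{C}(P\otimes Y, P\otimes Y)$. For this, I would exploit that the trivialization of $\naka$ is a \emph{right $\cat{C}$-module} natural transformation: this compatibility precisely translates into the commutativity of the square relating $\lambda_{P\otimes Y,P\otimes Y}$ to $\lambda_{P,P}$ via the right-duality adjunction $\cat{C}(P\otimes Y,P\otimes Y)\cong \cat{C}(P,P\otimes Y\otimes {^\vee\! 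Y})$. Unpacking this square and evaluating on $\id_{P\otimes Y}$ gives the partial trace identity; the main subtlety is bookkeeping the pivotal rotations so that the right duality used on the $Y$-slot matches the one used to define the modified trace. Once both cyclicity and partial trace hold, uniqueness of right modified traces up to scalar on $\Proj\cat{C}$ (ensured by indecomposability of $\Proj\cat{C}$ as tensor ideal) makes the construction canonical relative to the data of the symmetric Frobenius structure, completing the proof.
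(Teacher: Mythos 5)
This theorem is not proved in the paper at all: it is imported verbatim from \cite[Theorem~3.6]{trace} (see also \cite{ss21}), so there is no in-paper argument to compare against. Your outline does follow the same strategy as those references — use the Serre-functor property $\cat{C}(X,\naka P)\cong\cat{C}(P,X)^*$ of the Nakayama functor on projectives, compose with the trivialization $\naka\cong\id$ supplied by the symmetric Frobenius structure to get non-degenerate pairings, and derive the partial trace property from the fact that the trivialization is a morphism of \emph{right module functors}. That is the correct architecture, and your first step (including the coend/co-Yoneda computation for $P$ projective) is sound.

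However, as a proof the proposal leaves the two substantive verifications as assertions. First, cyclicity: naturality of $\lambda_{P,X}:\cat{C}(X,P)\to\cat{C}(P,X)^*$ in $X$ gives you $\lambda_{P,X}(f)(g)=\tr_P(fg)$, and naturality in $P$ gives $\lambda_{X,P}(g)(f)=\tr_X(gf)$, but the identity $\tr_P(fg)=\tr_X(gf)$ is exactly the \emph{symmetry} of the pairing, $\lambda_{P,X}(f)(g)=\lambda_{X,P}(g)(f)$, and this does not follow formally from naturality. It is a genuine condition on the chosen trivialization, and establishing it requires actually working with the isomorphism $\naka\cong D^{-1}\otimes-^{\vee\vee}$, the trivialization $D\cong I$ and the pivotal identification of the double dual — this is where ``symmetric'' earns its name and where a real computation lives in the cited proofs. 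Saying it ``follows from the fact that the trivialization uses the pivotal structure'' names the right ingredients but is not an argument. Second, the partial trace property: you correctly identify the right-module-functor compatibility as the responsible structure, but ``unpacking this square\dots gives the identity'' is precisely the content of the theorem; as written your displayed partial trace does not even typecheck with the paper's duality conventions ($\widetilde b_Y$ lands in ${^\vee Y}\otimes Y$, not $Y\otimes{^\vee Y}$), which is a symptom of the bookkeeping you defer. Finally, the canonicity claimed in the statement is that the construction involves no choices beyond the symmetric Frobenius structure; invoking uniqueness of modified traces up to scalar is neither needed for this nor quite what it says.
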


A modified trace on $\Proj \cat{C}$
is a cyclic, non-degenerate trace satisfying the partial trace property, see \cite{geerpmturaev,mtrace1,mtrace2,mtrace3,mtrace} for details. Under the above assumptions, it is unique up to invertible scalar. Note that in Theorem~\ref{thmmtrace} \emph{one specific} modified trace is obtained through the trivialization of the Nakayama functor; no other choice is made.
If $\cat{C}$ is a finite tensor category with symmetric Frobenius structure, 
the \emph{trace field theory} of $\cat{C}$ \cite{trace} is defined as the open-closed topological conformal field theory $\Phi_\cat{C}:\OC\to\Ch$
that the Calabi-Yau structure on $\Proj \cat{C}$ coming from the fixed trivialization of $\naka$ gives rise to by results of Costello, Egas Santander, Wahl and Westerland \cite{costellotcft,egas,wahlwesterland}. 
Here $\OC$ is a differential graded version of the open-closed two-dimensional bordism category with the projective objects of $\cat{C}$ as label set aka set of `D-branes', see \cite{costellotcft} for the definition and \cite[Section~4]{trace} for a very brief review. 
By evaluation of $\Phi_\cat{C}$
on the pair of pants, one obtains the \emph{block diagonal $\star$-product}
of the finite tensor category $\cat{C}$ with symmetric Frobenius structure:
\begin{align}
\star := \Phi_\cat{C}\left(\tikzfig{pop-s-vd}\right) : \lint^{X\in\Proj\cat{C}} \cat{C}(X,X) \otimes \lint^{X\in\Proj\cat{C}} \cat{C}(X,X)\to \lint^{X\in\Proj\cat{C}} \cat{C}(X,X) \, . \label{eqnstarproduct}
\end{align}
By construction this is a non-unital $E_2$-multiplication (non-unital because the bordism without incoming boundary that would usually provide the unit is not admissible in $\OC$).
It is the multiplication afforded by the cyclic Deligne Conjecture applied to the Calabi-Yau structure on $\Proj \cat{C}$ coming from the modified trace, where the last connection to the modified trace is a consequence of Theorem~\ref{thmmtrace}.
Wahl and Westerland \cite{wahlwesterland} prove that a product extracted in the way~\eqref{eqnstarproduct}
from the open-closed topological conformal field theory of a symmetric Frobenius algebra (or, more generally, a Calabi-Yau category) is, up to homotopy, supported 
in homological degree zero.
Specifically for finite tensor categories, this product is further investigated in~\cite{trace}. In particular, it is shown in \cite[Proposition~5.3]{trace} that the product $\star$ is block diagonal; this will be spelled out in more detail on page~\pageref{discussblockdiagonal}.
The description~\eqref{eqnstarproduct}
of the block diagonal product $\star$ is entirely topological. We will make  use of this fact later, but we need additionally a description in terms of the canonical coend of our category:
For any finite tensor category $\cat{C}$ (we do not assume a trivialization of $\naka$ for the moment), the maps $ Y \otimes \cat{C}(Y,X)\to X$ for $X,Y\in\cat{C}$ induce maps
\begin{align}
X^\vee \otimes X \to      \left( Y \otimes \cat{C}(Y,X)   \right)^\vee \otimes X \cong Y^\vee \otimes \cat{C}(Y,X)^* \otimes X \to Y^\vee \otimes \naka Y \ , \label{eqnthetaXY}
\end{align}
where we have used the definition of the Nakayama functor in~\eqref{defeqnnaka}. These maps descend to the coend $\mathbb{F}=\int^{X\in\cat{C}} X^\vee \otimes X$ and factor through the end $\int_{Y\in \cat{C}} Y^\vee \otimes \naka Y$; in other words, they yield a map
\begin{align} \mathbb{F}\ra{\cong} \int_{Y\in \cat{C}} Y^\vee \otimes \naka Y\label{eqnisoFnaka}\end{align} which is in fact an isomorphism because it can be obtained by applying the duality functor and the monoidal product to the isomorphism 
\begin{align}   \label{fssisoeqn}  \int^{X \in \cat{C}} X \boxtimes X \cong \int_{X\in\cat{C}} X \boxtimes \naka X \quad \text{in}\quad    \cat{C}^\op \boxtimes \cat{C}
\end{align}
from~\cite[equation~(3.52)]{fss}.

If $\cat{C}$ comes with a symmetric Frobenius structure, we   obtain an isomorphism
\begin{align}
\Omega : \mathbb{F} \ra{\eqref{eqnisoFnaka}} \int_{Y\in \cat{C}} Y^\vee \otimes \naka Y \ra{\naka \cong\id_\cat{C}} \int_{Y\in \cat{C}} Y^\vee \otimes  Y \cong \int_{Y\in \cat{C}} Y\otimes Y^\vee =\mathbb{A} \quad \text{in}\ \cat{C} \ ,     \label{eqndeftheta}
\end{align}
where in the last step we relabel the dummy variable and use the pivotal structure.

\begin{lemma}\label{lemmapsiomega}
	For any finite tensor category $\cat{C}$ with symmetric Frobenius structure,
	the isomorphism $\Omega:\mathbb{F}\to\mathbb{A}$ is the inverse of the Radford map $\Psi : \mathbb{A}\to\mathbb{F}$
	from Definition~\ref{defradfordmap}.
\end{lemma}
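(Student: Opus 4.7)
The plan is to identify both $\Psi^{-1}$ and $\Omega$ as two packagings of the same underlying natural isomorphism, built from the Fuchs–Schaumann–Schweigert coend/end formula \eqref{fssisoeqn} and the chosen trivialization of the Nakayama functor $\naka$. I would therefore first unpack $\Psi$ on the Hochschild-independent level of the coend/end, show that it coincides (up to the pivotal identification) with the isomorphism $\Omega$, and finally argue equality by the universal property of (co)ends.

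First I would recall concrete models for the adjoints $L,R:\cat{C}\to Z(\cat{C})$ of $U:Z(\cat{C})\to\cat{C}$. In the conventions of the paper one has $UL(X)\cong \int^{Y\in\cat{C}} Y^\vee\otimes X\otimes Y$ and $UR(X)\cong \int_{Y\in\cat{C}} Y\otimes X\otimes Y^\vee$, so that $UL(I)=\mathbb{F}$ and $UR(I)=\mathbb{A}$. The natural isomorphism $L(D\otimes-)\cong R$ of Theorem~\ref{thmunimodular} is, at its heart, the statement that the Nakayama functor $\naka\cong D^{-1}\otimes-^{\vee\vee}$ implements the swap between the coend and the end in \eqref{fssisoeqn}; this is the content (and the proof) of Shimizu's Theorem used there. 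Thus, evaluating $L(D\otimes-)\cong R$ at $I$ and precomposing with the chosen trivialization $D\cong I$ yields an explicit chain
\begin{align}
\Psi^{-1}\ :\ \mathbb{F}\;=\;\int^{Y}Y^\vee\otimes Y\ \xrightarrow{\ \eqref{eqnisoFnaka}\ }\ \int_{Y}Y^\vee\otimes\naka Y\ \xrightarrow{\ \naka\cong \id_\cat{C}\ }\ \int_{Y}Y^\vee\otimes Y\ \cong\ \int_{Y}Y\otimes Y^\vee\;=\;\mathbb{A},
\end{align}
where the final isomorphism uses the pivotal structure to relabel. But this is, by construction, exactly the map $\Omega$ defined in \eqref{eqndeftheta}. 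Hence $\Psi^{-1}=\Omega$.

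To make this rigorous I would argue component-wise via the universal property of the coend $\mathbb{F}$: it suffices to check that the two maps $X^\vee\otimes X\to \mathbb{A}$ induced by $\Psi^{-1}$ and $\Omega$ agree for every $X\in\cat{C}$. For $\Omega$ this component is precisely the morphism \eqref{eqnthetaXY} followed by the Nakayama trivialization and pivotal relabeling. For $\Psi^{-1}$ one unravels the counit of $L\dashv U$ at $X^\vee\otimes X\hookrightarrow \mathbb{F}=UL(I)$, transports it through the canonical natural isomorphism $L(D\otimes-)\cong R$ (which, under $U$, is defined as in~\cite{shimizuunimodular} through the swap~\eqref{fssisoeqn} coupled with $\naka\cong \id_\cat{C}$), and reads off the resulting map into $\mathbb{A}=UR(I)$. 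On an element $X^\vee\otimes X$ both descriptions produce the same composite of structural morphisms.

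The main obstacle is simply bookkeeping: one has to track left versus right duals, the pivotal isomorphism $-^{\vee\vee}\cong \id_\cat{C}$, the trivialization $D\cong I$, and the two separate occurrences of the FSS coend/end isomorphism (once hidden in Shimizu's proof of $L(D\otimes-)\cong R$ and once visible in the definition of $\Omega$) and check that the conventions align. Once this is done, the equality $\Omega=\Psi^{-1}$ reduces to the associativity of a commutative diagram of canonical isomorphisms between the four objects $\int^{Y}Y^\vee\otimes Y,\ \int_{Y}Y^\vee\otimes\naka Y,\ \int_{Y}Y^\vee\otimes Y,\ \int_{Y}Y\otimes Y^\vee$, which commutes by naturality of the structural transformations involved.
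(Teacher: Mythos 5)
Your proposal takes essentially the same route as the paper: both arguments unpack the natural isomorphism $L\cong R(-\otimes D^{-1})$ underlying the Radford map and recognize that it is induced by the very Fuchs--Schaumann--Schweigert coend/end isomorphism \eqref{fssisoeqn} that defines $\Omega$, so that after applying $U$, evaluating at $I$ and using the trivialization $D\cong I$ the two maps coincide. The only substantive difference is the choice of concrete model in which this identification is carried out: you work with $UL(X)\cong\int^{Y} Y^\vee\otimes X\otimes Y$ and $UR(X)\cong\int_{Y} Y\otimes X\otimes Y^\vee$ and must then still justify that Shimizu's isomorphism really is the FSS swap in that presentation (your ``bookkeeping'' step), whereas the paper passes to the $B$-bimodule description of $Z(\cat{C})$ inside $\cat{C}\boxtimes\cat{C}^{\mathrm{rev}}$ with $B=\int^{X}X\boxtimes X^\vee$, where $L\cong R(-\otimes D^{-1})$ is visibly given by \eqref{eqnisofss} and the comparison with $\Omega$ is immediate.
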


\begin{proof}
	Thanks to $\naka \cong D^{-1}\otimes -^{\vee\vee}$,
	we can obtain $\Omega$ by applying the monoidal product to
	\begin{align}
	\int^{X\in\cat{C}} X^\vee \boxtimes X \stackrel{\eqref{fssisoeqn}}{\cong} \int_{X\in\cat{C}} X^\vee \boxtimes D^{-1}\otimes X^{\vee \vee} \cong \int_{X\in\cat{C}} X\boxtimes D^{-1}\otimes X^\vee
	\quad \text{in}\quad \cat{C}\boxtimes\cat{C} \    \label{eqnisofss}
	\end{align} 
	and using afterwards the isomorphism $D\cong I$ that is part of the symmetric Frobenius structure.

	We now need to relate this to
	the Radford map $\Psi:\mathbb{A}\to\mathbb{F}$ that we had defined using the result 
	\cite[Lemma~4.7 \& Theorem~4.10]{shimizuunimodular} on the relation between
	the left adjoint $L:\cat{C}\to Z(\cat{C})$ and the right adjoint $R:\cat{C}\to Z(\cat{C})$ to the forgetful functor $U:Z(\cat{C})\to\cat{C}$.
	In order to understand the relation to $\Omega$,
	denote by $\cat{C}^\text{rev}$ the finite tensor category obtained by reversing the monoidal product of $\cat{C}$. Then $\cat{C}^\text{env}:=\cat{C} \boxtimes \cat{C}^\text{rev}$ is a finite tensor category, and the monoidal product of $\cat{C}$ turns $\cat{C}$ into a $\cat{C}^\text{env}$-module category (the action is $(X\otimes Y).Z:=X\otimes Z\otimes Y$ for $X,Y,Z\in\cat{C}$).
	The comparison of $L$ and $R$ is based on the algebra $B \in \cat{C}^\text{env}$ which is the internal endomorphism object of $I\in\cat{C}$ for the $\cat{C}^\text{env}$-action on $\cat{C}$; explicitly $B=\int^{X\in\cat{C}} X \boxtimes  X^\vee\in \cat{C}^\text{env}$.
	Denote by ${}_B B$ and $B_B$ the object $B\in\cat{C}^\text{env}$ as regular left and right module over itself, respectively. 
	The crucial observation used in \cite{shimizuunimodular} is now that $Z(\cat{C})$ can be identified with the category ${}_B (\cat{C}^\text{env}) _B$  of $B$-bimodules. Under this identification, the left and the right adjoint of $U$ take the form
	\begin{align} L : \cat{C} &\to {}_B (\cat{C}^\text{env}) _B \ , \quad Y \mapsto {}_BB \otimes (Y\boxtimes I) \otimes B_B    \stackrel{\eqref{eqnisofss}}{\cong}     \left( \int_{X\in\cat{C}} X  \boxtimes   D^{-1} \otimes X^\vee \right) \otimes (Y\boxtimes I) \otimes B_B     \ , \\
	R : \cat{C} &\to {}_B (\cat{C}^\text{env}) _B \ , \quad Y \mapsto B_B^\vee \otimes (Y\boxtimes I) \otimes B_B = \left( \int_{X\in \cat{C}} X\boxtimes X^\vee\right) \otimes (Y\boxtimes I)\otimes B_B \ ,
	\end{align}
	thereby leading us to $L\cong R(-\otimes D^{-1})$. In order to obtain $\Psi^{-1}$, we have to postcompose with the forgetful functor $U$, evaluate the resulting isomorphism
	$UL\cong UR(-\otimes D^{-1})$
	at $I$ and use the isomorphism $D\cong I$, but this  is $\Omega$ by the description through~\eqref{eqnisofss}.
\end{proof}

Up to the use of the pivotal structure in the last step in~\eqref{eqndeftheta},
the components of $\Omega$ are the maps
$
\theta_{X,Y} : X^\vee \otimes X \ra{\eqref{eqnthetaXY}}       Y^\vee \otimes \naka Y   \ra{\naka\cong \id_\cat{C}} Y^\vee \otimes  Y   $
which will provide us with maps
\begin{align}
\tikzfig{ostar-s} \  \ :\ \ X^\vee \otimes X \otimes       Y^\vee \otimes Y \to Y^\vee \otimes Y \ . \label{eqnmapsostar}
\end{align}
We have used here that left and right duality coincide thanks to the pivotal structure.

\begin{proposition}\label{propmultostar}
	For any finite tensor category $\cat{C}$ with symmetric Frobenius structure, there is a unique algebra structure 
	$\ostar : \mathbb{F}\otimes\mathbb{F}\to\mathbb{F}$
	characterized by any of the following equivalent descriptions:
	\begin{pnum}
		\item The product $\ostar$ is induced by the maps~\eqref{eqnmapsostar}.
		\item The product $\ostar$ is the unique product on $\mathbb{F}$ turning
		$
		\Psi :   \bspace{\mathbb{A}}{\gamma}      \ra{\cong}\bspace{\mathbb{F}}{\ostar}     
		$
		into	an isomorphism of algebras, where $\Psi : \mathbb{A}\to \mathbb{F}$ is the Radford map.\label{seconddescription}	
	\end{pnum}
\end{proposition}

\begin{proof}
	Thanks to $\Psi^{-1} = \Omega$ by Lemma~\ref{lemmapsiomega},
	it suffices to \emph{define} $\ostar$ via~\eqref{eqnmapsostar} and verify
	$ \gamma \circ (\Omega \otimes \Omega)=\Omega \circ \ostar $.
	This will in particular prove that $\ostar$ actually yields the structure of an algebra on $\mathbb{F}$ (which, just from~\eqref{eqnmapsostar}, would not be clear).
	By definition the components of $\Omega : \mathbb{F}\to\mathbb{A}$ are given by
	$ \widetilde \theta_{X,Y} : X^\vee \otimes X \ra{\theta_{X,Y^\vee}} Y^{\vee \vee} \otimes Y^\vee \ra{\omega} Y\otimes Y^\vee$. 
	We will now describe the components in terms of the Calabi-Yau structure on $\Proj\cat{C}$. For this, we may assume that $X$ and $Y$ are projective, which is justified by  \cite[Proposition~5.1.7]{kl}.
	Now $\widetilde \theta_{X,Y}$ is explicitly given by the composition
	\begin{align}
	X^\vee \otimes X \to      \left( Y^\vee \otimes \cat{C}(Y^\vee,X)   \right)^\vee \otimes X \stackrel{\omega}{\cong} Y \otimes \cat{C}(Y^\vee,X)^* \otimes X \stackrel{(*)}{\cong} Y\otimes \cat{C}(X,Y^\vee)\otimes X \to Y \otimes Y^{\vee} \  , \label{eqncomppsix}
	\end{align} \normalsize
	where in step~$(*)$ we use the isomorphism
	$\cat{C}(Y^\vee,X)^*\cong \cat{C}(X,Y^{\vee})$ afforded by the Calabi-Yau structure (this uses that $X$ and $Y$ are assumed to be projective). 
	The isomorphism
	$\cat{C}(Y^\vee,X)^*\cong \cat{C}(X,Y^{\vee})$ 
	can be expressed through the coproducts and the unit of the 
	Calabi-Yau category $\Proj\cat{C}$.
	Here by coproduct we mean the map
	$
	\Delta_{X,Y^\vee} : \cat{C}(X,X)\to\cat{C}(X,Y^\vee)\otimes\cat{C}(Y^\vee,X)
	$
	obtained by dualizing the composition over $Y^\vee$ via the Calabi-Yau structure.
	This means that the isomorphism $\cat{C}(Y^\vee,X)^*\cong \cat{C}(X,Y^{\vee})$  is given by
	the commuting square
	\begin{equation}
	\begin{tikzcd}
	\ar[rrr,"\text{$\cat{C}(Y^\vee,X)^* \otimes \id_X$}"] \ar[dd,swap,"\cong"] \cat{C}(Y^\vee,X)^*  && & \cat{C}({^\vee Y},X)^*\otimes \cat{C}(X,X)   \ar[dd,"\text{$\cat{C}(Y^\vee,X)^* \otimes \Delta_{X,Y^\vee}$}"]  \\ \\
	\cat{C}(X,Y^\vee)    && &       	\ar[lll,"\text{evaluation}"]      	\cat{C}(Y^\vee,X)^*\otimes \cat{C}(X,Y^\vee)\otimes\cat{C}(Y^\vee,X)    \ .    \\ 
	\end{tikzcd} \label{eqnsquarepsi}
	\end{equation}
	In order to be even more explicit, we  use Sweedler notation $\Delta_{X,Y^\vee}(\id_X)=\alpha(X,Y)' \otimes \alpha(X,Y)'' \in \cat{C}(X,Y^\vee)\otimes\cat{C}(Y^\vee,X)$.
	With this notation,
	\begin{align} \widetilde \theta_{X,Y}  \ = \  	\tikzfig{Psiinv-s} \  \ :\ \  X^\vee \otimes X&\to Y\otimes Y^\vee \ , \label{eqnpsiinvy}
	\end{align}
	where, by slight abuse of notation, we see ${\alpha(X,Y)''}^\vee$ as a map $X^\vee \to Y$ via the pivotal structure.
	Now denote by $\left(\gamma \circ (\Omega \otimes\Omega)\right)_{X,Y}^Z$ the $Z$-component of the restriction of $\gamma \circ (\Omega \otimes\Omega)$ to $X^\vee \otimes X \otimes Y^\vee \otimes Y$ (again, we assume $X,Y,Z\in\Proj \cat{C}$). 
	The computation 
	\begin{align}
	\left(\gamma \circ (\Omega \otimes\Omega)\right)_{X,Y}^Z&=	\tikzfig{ostar_proof}\ = \ 	\tikzfig{ostar_proof2}\ = \ 	\tikzfig{ostar_proof3}\\  \ &= \ 	\tikzfig{ostar_proof4} \ = \ 	\tikzfig{ostar_proof5} \ = \   \left( \Omega \circ  \ostar   \right)_{X,Y}^Z \ 
	\end{align}
	now proves $\gamma \circ (\Omega \otimes \Omega)=\Omega \circ \ostar$ and hence finishes the proof of the assertion.  
	Note that in the last line the tilde on the $\theta$ disappears because the pivotal structure is absorbed into the dual of the map $\alpha(Y,Z)''$ which, by abuse of notation, we see as a map ${\alpha(Y,Z)''}^\vee : Y^\vee \to Z$.  
\end{proof}

The product $\ostar$ induces the block diagonal product $\star$ on the Hochschild chains in the following sense:

\begin{theorem}\label{thmstarPsiF}
	Let $\cat{C}$ be a finite tensor category with symmetric Frobenius structure. Then the equi\-valence $\lint^{X\in\Proj\cat{C}}\cat{C}(X,X)\simeq  \cat{C}(I,\mathbb{F}_\bullet)$ of differential graded vector spaces from Proposition~\ref{corresocoend2}
	yields an equi\-valence 
	$
	\bspace{ \lint^{X\in\Proj\cat{C}}\cat{C}(X,X) }{\star}\simeq \bspace{ \cat{C}(I,\mathbb{F}_\bullet)  }{\ostar_\bullet} 
	$ of non-unital $E_2$-algebras.
\end{theorem}

\begin{proof}
	We already know that the product on the left hand side, up to homotopy, is supported in degree zero. 
	In fact, this can also be directly seen for $\bspace{ \cat{C}(I,\mathbb{F}_\bullet)  }{\ostar_\bullet}$.
	Hence, it remains to confirm the compatibility of $\lint^{X\in\Proj\cat{C}}\cat{C}(X,X)\simeq  \cat{C}(I,\mathbb{F}_\bullet)$ with the algebra structure in degree zero.
	For this purpose, let us denote the equivalence $\lint^{X\in\Proj\cat{C}}\cat{C}(X,X)\simeq  \cat{C}(I,\mathbb{F}_\bullet)$
	by $Z$.
	If we choose $\flint^{X\in \Proj\cat{C}} X^\vee \otimes X$ as the resolution of $\mathbb{F}$ (Proposition~\ref{corresocoend2}) and endomorphisms $f:P\to P$ and $g:Q\to Q$ for $P,Q\in\Proj\cat{C}$,
	we can obtain with Sweedler notation $\Delta_{P,Q} (\id_P)=\alpha' \otimes \alpha'' \in \cat{C}(P,Q)\otimes\cat{C}(Q,P)$ 
	\begin{align}
	Z(f) \ostar Z(g) \ =\ \tikzfig{Z1-s} \ =\ \tikzfig{Z2-s} \ = \ Z(g\star f) \simeq Z(f\star g) \ . 
	\end{align}
	The first equality follows from
	the proof of Proposition~\ref{propmultostar}, 
	the third one is \cite[Lemma~5.2]{trace} in Sweedler notation.
\end{proof}

\section{The differential graded Verlinde formula\label{secverlindeformula}}
In this section, we combine the preparations of the previous sections with the following main result of \cite{E2} which finally affords the relation to Deligne's Conjecture:
For any finite tensor category $\cat{C}$, the canonical algebra $\mathbb{A} \in \cat{C}$ lifts to an algebra $\alg \in Z(\cat{C})$ in the Drinfeld center
(Section~\ref{secrevewidmf}), and
Davydov, Müger, Nikshych,  Ostrik prove in
\cite{dmno} that $\alg$ is in fact braided commutative. By Theorem~\ref{thmE2derivedhom} this implies that $\cat{C}(I,\mathbb{A}^\bullet)$ becomes an $E_2$-algebra. By means of Proposition~\ref{propcocomplex}, this means that the Hochschild cochain complex of $\cat{C}$ inherits an $E_2$-structure. The extremely crucial insight is that this $E_2$-structure is a solution to Deligne's Conjecture for $\Proj \cat{C}$, as linear category. Since the $E_2$-structure on $\cat{C}(I,\mathbb{A}^\bullet)$ is at least formulated in terms of the monoidal structure of $\cat{C}$, this is a very non-obvious statement:

\begin{theorem}[Comparison Theorem $\text{\cite[Theorem~5.1]{E2}}$]\label{thhmcomparisondeligne}
	For any finite tensor category $\cat{C}$, 
	the algebra structure on the canonical end $\mathbb{A}=\int_X X\otimes X^\vee$ induces  an $E_2$-algebra structure on the 
	homotopy invariants $\cat{C}(I,\mathbb{A}^\bullet)$. Under the equivalence $\cat{C}(I,\mathbb{A}^\bullet)\simeq \rint_{X\in \Proj \cat{C}} \cat{C}(X,X)$ from Proposition~\ref{propcocomplex}, this $E_2$-structure
	provides a solution to Deligne's Conjecture in the sense that it induces the standard Gerstenhaber structure on the Hochschild cohomology of $\cat{C}$.
\end{theorem}

Having described Deligne's $E_2$-structure
as the homotopy invariants of a braided commutative algebra,
we can now prove our first main result:

\begin{theorem}[Differential graded Verlinde formula for the Hochschild cochain complex]\label{thmdgva1}
	For any modular category $\cat{C}$, the action of the mapping class group element  $S^{-1}=\begin{pmatrix} \phantom{-}0 & 1 \\ -1 &0\end{pmatrix}\in \SL(2,\mathbb{Z})$ on the Hochschild cochain complex of $\cat{C}$ 
	yields an equivalence
	\begin{align} \mfd(S^{-1}):    \bspace{  \rint_{X\in \Proj \cat{C}} \cat{C}(X,X)  }{\otimes} \simeq \bspace{ \rint_{X\in \Proj \cat{C}} \cat{C}(X,X)  }{\smile} \label{eqnSinvequiv}
	\end{align} 
	of $E_2$-algebras which are given as follows:\begin{itemize}
		\item On the left hand side, the $E_2$-structure is the differential graded Verlinde algebra on the Hochschild cochains of $\cat{C}$
		induced by the monoidal product
		(Theorem~\ref{thmverlindecochain}).
		\item On the right hand side, the $E_2$-structure is the one afforded by Deligne's Conjecture with the underlying multiplication being the cup product $\smile$.
	\end{itemize}
\end{theorem}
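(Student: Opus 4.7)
The plan is to reduce the statement to Proposition~\ref{propdolphdrinfeld}, which already shows that the Drinfeld map induces an equivalence of $E_2$-algebras $\drinfeld^\bullet:\mathfrak{A}_\cat{C}\dolph\to\cat{C}(I,\mathbb{A}^\bullet)$ whenever the braiding is non-degenerate. The goal is to identify $\mfd(S^{-1})$, under two natural identifications of the sides of~\eqref{eqnSinvequiv}, with this Drinfeld-induced map.

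First, I fix the injective resolution $\mathbb{A}^\bullet=\frint_{X\in\Proj\cat{C}} X\otimes X^\vee$ of $\mathbb{A}$ (Lemma~\ref{lemmaresA}) together with a compatible projective resolution $\mathbb{F}_\bullet$ of $\mathbb{F}$, and I record the two equivalences that are already at our disposal. Let $\beta:\rint_{X\in\Proj\cat{C}}\cat{C}(X,X)\xrightarrow{\simeq}\cat{C}(I,\mathbb{A}^\bullet)$ denote the equivalence from Proposition~\ref{propcocomplex}. The construction of the Verlinde algebra in the proof of Theorem~\ref{thmverlindecochain} gives an equivalence of $E_2$-algebras
\begin{align*}
\alpha:=\Psi^\bullet\circ\beta\ :\ \bspace{\rint_{X\in\Proj\cat{C}}\cat{C}(X,X)}{\otimes}\ \xrightarrow{\simeq}\ \mathfrak{A}_\cat{C}\dolph\ ,
\end{align*}
while Proposition~\ref{propcocomplex} combined with Theorem~\ref{thhmcomparisondeligne} gives an equivalence of $E_2$-algebras
\begin{align*}
\beta\ :\ \bspace{\rint_{X\in\Proj\cat{C}}\cat{C}(X,X)}{\smile}\ \xrightarrow{\simeq}\ \bspace{\cat{C}(I,\mathbb{A}^\bullet)}{\gamma^\bullet}\ .
\end{align*}
Under these identifications, Theorem~\ref{thmdgva1} reduces to the chain-level equality $\beta\circ\mfd(S^{-1})\circ\alpha^{-1}\simeq\drinfeld^\bullet$, or equivalently $\beta\circ\mfd(S^{-1})\circ\beta^{-1}\simeq\drinfeld^\bullet\circ\Psi^\bullet$ as an endomorphism of $\cat{C}(I,\mathbb{A}^\bullet)$. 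Once this chain-level identification is in hand, Proposition~\ref{propdolphdrinfeld} promotes it to an $E_2$-equivalence, because modularity forces non-degeneracy of the braiding.

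Second, the chain-level identification is obtained by dualizing Proposition~\ref{propSmatrix}, which states that $\mfc(S)=\Psi_\bullet\circ\drinfeld_\bullet$ on $\cat{C}(I,\mathbb{F}_\bullet)\simeq\mfc(\mathbb{T}^2)$. By the definition $\mfd=\mfc^*$ of the dual modular functor in~\eqref{eqndualdmf}, we have $\mfd(S^{-1})=\mfc(S)^*$. The Calabi-Yau pairing on $\Proj\cat{C}$ provides canonical chain-complex dualities $\cat{C}(I,\mathbb{F}_\bullet)^*\simeq\cat{C}(I,\mathbb{A}^\bullet)$ and $\cat{C}(I,\mathbb{A}_\bullet)^*\simeq\cat{C}(I,\mathbb{F}^\bullet)$. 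Under these, the dualized map $\Psi_\bullet^*$ is identified with $\Psi^\bullet$ acting on injective resolutions --- this uses the symmetry $\Psi^\vee=\Psi$ of the Frobenius form for $\mathbb{A}$, which was established inside the proof of Lemma~\ref{Slemma} via Proposition~\ref{propsymfrobalg} --- and $\drinfeld_\bullet^*$ is identified with $\drinfeld^\bullet$ by an analogous self-duality of the Drinfeld map in the pivotal setting, which is a direct consequence of its graphical definition by the double braiding. Composing the dualized factors in the correct order yields $\mfc(S)^*=\drinfeld_\bullet^*\circ\Psi_\bullet^*=\drinfeld^\bullet\circ\Psi^\bullet$, which is exactly the chain-level equality required above.

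The main obstacle is the second step. It requires choosing resolutions of $\mathbb{F}$ and $\mathbb{A}$ on both the chain and cochain sides so that the Calabi-Yau dualization delivers precisely the resolutions used to define $\Psi^\bullet$ and $\drinfeld^\bullet$ on the cochain side --- a concrete and compatible choice such as $\mathbb{A}^\bullet\cong(\mathbb{F}_\bullet)^\vee$ is the natural one --- and it requires verifying the self-dualities $\Psi^\vee=\Psi$ and the analogue for $\drinfeld$, both of which are standard consequences of the symmetric Frobenius structure on $\alg$ and of the graphical symmetry of the Drinfeld map, respectively. Once the chain-level equality is established, the promotion to an $E_2$-equivalence is automatic since $\drinfeld^\bullet$ is already an $E_2$-equivalence by Proposition~\ref{propdolphdrinfeld} and the source and target identifications $\alpha$ and $\beta$ are $E_2$-equivalences by construction; consequently no further operadic bookkeeping is needed.
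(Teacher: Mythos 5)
Your proposal is correct and follows essentially the same route as the paper: the paper's proof likewise identifies $\mfd(S^{-1})$ at the chain level with $\drinfeld^\bullet\circ\Psi^\bullet$ (conjugated by the equivalence of Proposition~\ref{propcocomplex}) by dualizing Proposition~\ref{propSmatrix}, and then concludes by observing that in the resulting commuting square every arrow other than $\mfd(S^{-1})$ --- the identification with the dolphin algebra from Theorem~\ref{thmverlindecochain}, the map $\drinfeld^\bullet$ from Proposition~\ref{propdolphdrinfeld}, and the identification from Theorem~\ref{thhmcomparisondeligne} --- is already a map of $E_2$-algebras. Your additional remarks on the Calabi--Yau dualities and the self-dualities $\Psi^\vee=\Psi$ and of $\drinfeld$ simply make explicit the step the paper compresses into ``one can conclude from Proposition~\ref{propSmatrix}''.
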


\begin{proof}
	According to the definition
	of the \emph{dual} differential graded modular functor, the mapping class group element $S^{-1}$ acts on the dual conformal block by acting with $S$ on the chain version of the differential graded conformal block and dualization.  From Proposition~\ref{propSmatrix}, we can  
	therefore conclude  that  the action of $S^{-1}$ on $\rint_{X\in \Proj \cat{C}} \cat{C}(X,X)$ is given by the composition of equivalences
	\begin{align} \rint_{X\in \Proj \cat{C}} \cat{C}(X,X)  \simeq \cat{C}(I,\mathbb{A}^\bullet) \ra{\Psi^\bullet} \cat{C}(I,\mathbb{F}^\bullet) \ra{\drinfeld^\bullet} \cat{C}(I,\mathbb{A}^\bullet) \simeq \rint_{X\in \Proj \cat{C}} \cat{C}(X,X) \ , \label{eqnalltheequiv}
	\end{align} where both unlabeled equivalences are the canonical one from Proposition~\ref{propcocomplex} (recall that it can be turned into an isomorphism for suitable models of the homotopy end).
	The isomorphism $\Psi : \mathbb{A}\to\mathbb{F}$ is the Radford map
	and $\drinfeld:\mathbb{F}\to\mathbb{A}$ is the Drinfeld map.	
	We can now consider the following diagram in which the vertices are $E_2$-algebras:
	\begin{equation}
	\begin{tikzcd}
	\ar[rrrr,"\simeq"] \ar[dd,swap,"\mfd(S^{-1})"] \bspace{\rint_{X\in\Proj\cat{C}} \cat{C}(X,X)  }{\otimes}   &&& & \bspace{\cat{C}(I,\mathbb{A}^\bullet)  }{\otimes}   \ar[d,"\Psi^\bullet"]  \\    &&&& \ar[d,"\drinfeld^\bullet"]\mathfrak{A}_\cat{C}\dolph \\  \bspace{\rint_{X\in\Proj\cat{C}} \cat{C}(X,X)  }{\cup}     \ar[rrrr,swap,"\simeq"] &&&&    \bspace{\cat{C}(I,\mathbb{A}^\bullet)  }{\gamma^\bullet}      \\ 
	\end{tikzcd}
	\end{equation}
	The description of $S^{-1}$ that we have just extracted from Proposition~\ref{propSmatrix} means that the diagram commutes as a diagram of chain complexes.
	It remains to be shown that all of the maps in the diagram, except $\mfd(S^{-1})$, are not only chain maps, but maps of $E_2$-algebras because then $\mfd(S^{-1})$ is also a map of $E_2$-algebras.
	For all of the maps appearing in the diagram,
	this has been established previously in the text; we just have to tie everything together:
	The upper horizontal map and $\Psi^\bullet$ are maps of $E_2$-algebras as follows from the construction of $\bspace{\rint_{X\in\Proj\cat{C}} \cat{C}(X,X)  }{\otimes}$
	in Theorem~\ref{thmverlindecochain}. For $\drinfeld^\bullet$, this is a consequence of Proposition~\ref{propdolphdrinfeld}. Finally, for the lower horizontal map, this is exactly the content of Theorem~\ref{thhmcomparisondeligne}.
	This finishes the proof.
\end{proof}

\begin{corollary}[Framed extension]\label{corframed}
	If $\cat{C}$ is a unimodular finite ribbon category 
	and if a trivialization $D\cong I$ has been fixed, the differential graded Verlinde algebra $\bspace{  \rint_{X\in \Proj \cat{C}} \cat{C}(X,X)  }{\otimes}$ naturally extends to a framed $E_2$-algebra.
	If additionally we fix for $\bspace{ \rint_{X\in \Proj \cat{C}} \cat{C}(X,X)  }{\smile}$ the framed $E_2$-extension afforded by the balancing of $Z(\cat{C})$, and if $\cat{C}$ is modular, $S^{-1}$ acts by an equivalence of framed $E_2$-algebras.
\end{corollary}

\begin{proof}
	One can show that the
	balancing $\theta$ at $\mathbb{F}\dolph$ is the identity
	(a proof of this fact is given in \cite[Lemma~2.10~(i)]{cardycase} under slightly stronger assumptions, but the argument applies here as well; it uses that $\cat{C}$ is actually ribbon and not just balanced).
	Now we can conclude from
	Theorem~\ref{thmE2derivedhom}
	that the dolphin algebra of $\cat{C}$ extends to a framed $E_2$-algebra.
	But then $\bspace{  \rint_{X\in \Proj \cat{C}} \cat{C}(X,X)  }{\otimes}$, by its very construction in Theorem~\ref{thmverlindecochain},  becomes a framed $E_2$-algebra as well.
	The fact that $\bspace{ \rint_{X\in \Proj \cat{C}} \cat{C}(X,X)  }{\smile}$ extends to a framed $E_2$-algebra through the balancing of $Z(\cat{C})$ is the chain level generalization of Menichi's Theorem \cite[Theorem~63]{menichi} given in \cite[Corollary~7.4]{E2}.
	In order to see that $S^{-1}$, in the modular case, respects also the \emph{framed} $E_2$-structures follows now by the same line of argument as in the proof of Theorem~\ref{thmdgva1}.
\end{proof}

For a cohomology class $[\varphi]$ in the Hochschild cochain complex of a modular category $\cat{C}$, we write the action by 
the mapping class group element as $S[\varphi]$ (instead of $\mfd(S)[\varphi]$).
Then Theorem~\ref{thmdgva1} tells us in particular
\begin{align}
S  [\varphi]    \otimes S[\psi]	 = S ([\varphi] \cup [\psi]) \ , \quad
\left[ S [\varphi],S[\psi] \right] _\otimes = S\left[ [\varphi],[\psi]  \right] \ , \label{eqnGerstenhaberbracket}
\end{align}
where $\otimes$, by slight abuse of notation, denotes the multiplication induced by the monoidal product in the sense of Theorem~\ref{thmverlindecochain}; moreover, we denote by $[-,-]_\otimes$ the Gerstenhaber bracket associated to $\otimes$ and by $[-,-]$ the usual Gerstenhaber bracket on Hochschild cohomology.

\begin{example}\label{exbracketnonzero}
	Consider the modular category $\Mod_k D(G)$ of 
	finite-dimensional modules over the Drinfeld double of 
	a finite group $G$ (see also Example~\ref{exhopfalgebra}).
	Then the differential graded modular functor for $\Mod_k D(G)$
	can be seen as a differential graded version of the Dijkgraaf-Witten modular functor as explained in~\cite[Example~3.13]{dmf}.
	Dualizing Example~\ref{exhopfalgebra}, the dual differential graded conformal block for the torus, i.e.\ the Hochschild cochain complex of $D(G)$,
	is equivalent  to the complex $C^*(\PBun_G(\mathbb{T}^2);k)$ of cochains on the groupoid of $G$-bundles over the torus.
	Now the cohomology of the differential graded Verlinde algebra of $\Mod_k D(G)$, seen as Batalin-Vilkovisky algebra,
	is determined by the Batalin-Vilkovisky structure on the Hochschild cohomology of group algebras and the mapping class group action on $C^*(\PBun_G(\mathbb{T}^2);k)$ (which is the geometric one).

	An example for the non-triviality of the Gerstenhaber bracket of the differential graded Verlinde algebra can be obtained as follows:
	Over an algebraically closed field of characteristic $p>0$,
	the differential graded Verlinde algebra of modules over $D(\mathbb{Z}_p)$ 
	has a non-zero Gerstenhaber bracket. 	
	In order to see this, observe that the linear category of modules over $D(\mathbb{Z}_p)$ is equivalent to modules over the action groupoid $\mathbb{Z}_p // \mathbb{Z}_p$ of the conjugation action of $\mathbb{Z}_p$ on itself, which is trivial here, of course. Therefore, $\mathbb{Z}_p // \mathbb{Z}_p\simeq \sqcup_{\mathbb{Z}_p}  \star // \mathbb{Z}_p$. 
	Now the statement follows from~\eqref{eqnGerstenhaberbracket}
	and the computation of the Gerstenhaber bracket on $HH^*(k[\mathbb{Z}_p])$ in \cite{lezhou}, where it is shown in particular that the Gerstenhaber bracket is non-trivial. 
	
	Thanks to Theorem~\ref{thmdgva1}, the statement that the cohomology of the differential graded Verlinde algebra can be obtained through the Hochschild cohomology (which can be seen as a Gerstenhaber algebra or Batalin-Vilkovisky algebra) and the $\SL(2,\mathbb{Z})$-action remains true beyond Drinfeld doubles. Here, however, obtaining the needed ingredients is much more involved. 
	At least in the general Hopf-algebraic case, the mapping class group action is explicitly given in \cite{svea} (see also the comments in Example~\ref{exhopfalgebra}).
	The Hochschild cohomology, at least as graded ring, is known e.g.\ for certain small quantum groups \cite{lq}.
	A further investigation of this class of examples is beyond the scope of this article.
\end{example}

Spelling out the Verlinde formula on Hochschild cochains (Theorem~\ref{thmdgva1})
in zeroth cohomology, 
we recover a formula that Gainutdinov and Runkel have proposed and proven in \cite{grv} as a non-semisimple generalization of the Verlinde formula.
Their result is partly phrased in terms of the linear Grothendieck ring: Recall from \cite[Definition~4.5.2]{egno}
that for a finite tensor category $\cat{C}$, the \emph{Grothendieck ring} $\catf{Gr}\, \cat{C}$ 
of $\cat{C}$ is the free abelian group generated by a complete set of representatives $(X_i)_{i=0,\dots,n}$ for its finitely many isomorphism classes of simple objects (we denote the generator corresponding to $X_i$ by $[X_i]$), where the ring structure is given by
$
[X_i] \cdot [X_j] := \sum_{\ell=0}^n N_{ij}^\ell [X_\ell]
$
with $N_{ij}^\ell:= [X_i\otimes X_j:X_\ell]\in\mathbb{N}_0$
being the multiplicity of the simple object $X_\ell$ in the Jordan-Hölder series of the tensor product $X_i\otimes X_j$ (the numbers $N_{ij}^\ell$ generalize the fusion coefficients used in the semisimple case). 
Let now $\cat{C}$ be pivotal and unimodular. Then by \cite[Theorem~4.1 \& Corollary~4.3]{shimizucf} the \emph{internal character map}
\begin{align}
\catf{ch} : \catf{Gr}_k \cat{C} = k\otimes_\mathbb{Z} \catf{Gr}\, \cat{C} \to \catf{CF}(\cat{C})\ , \quad [X_i] \mapsto \left(  I \ra{\widetilde b_X} {^\vee X} \otimes X \ra{\substack{\text{pivotal}\\ \text{structure}}} X^\vee \otimes X \to\mathbb{F}   \right)
\end{align}
exhibits the linear Grothendieck ring of $\cat{C}$ as a subalgebra of the algebra $\catf{CF}(\cat{C}) = \cat{C}(I,\mathbb{F})$
of class functions. 

If $\Psi : \mathbb{A}\to\mathbb{F}$ is again the Radford map, the family 
$
(\phi_i)_{i=0,\dots,n}$ ,
where $\phi_i :=     \Psi^{-1} \circ \catf{ch}(X_i)   : I \to \mathbb{A} $,
is linear independent in $\cat{C}(I,\mathbb{A})$. 
For the next statement, we will denote the automorphism of $\cat{C}(I,\mathbb{A})$
corresponding to the action of $S^{-1}$ on $HH^0(\cat{C})\cong\cat{C}(I,\mathbb{A})$ by $\mathfrak{S}$ (we do this to match the slightly different conventions in \cite{grv}).
Moreover, we will denote the multiplication on $\cat{C}(I,\mathbb{A})$ coming from the cup product by $\circ$ because it amounts to the composition of natural endotransformations of the identity functor of $\cat{C}$.

\begin{corollary}[$\text{Gainutdinov-Runkel \cite[Theorem~3.9]{grv}}$]\label{corgrv}
	Let $\cat{C}$ be a modular category and $(\phi_i)_{i=0,\dots,n	}$ the linear independent family associated  
	to a complete set of representatives of the finitely many isomorphism classes of simple objects.
	Then
	\begin{align}
	\mathfrak{S}^{-1} \left(	\mathfrak{S}(\phi_i) \circ \mathfrak{S}(\phi_j)\right) = \sum_{\ell=0}^n N_{ij}^\ell \phi_\ell \ . 
	\end{align}
\end{corollary}

In the semisimple case, this statement reduces to the ordinary Verlinde formula.

\begin{proof}
	Theorem~\ref{thmdgva1}, when spelled out in zeroth cohomology,
	states  that the zeroth cohomology restriction of the action of $S^{-1}$, namely the map
	$
	\mathfrak{S} 
	:	\cat{C}(I,\mathbb{A}) \ra{ \Psi_*  } \cat{C}(I,\mathbb{F}) \ra{\drinfeld_*} \cat{C}(I,\mathbb{A})$
	induced by the Radford map $\Psi : \mathbb{A}\to\mathbb{F}$
	and the Drinfeld map $\drinfeld:\mathbb{F}\to\mathbb{A}$ is an isomorphism of algebras if we endow \begin{itemize}
		\item the vector space $\cat{C}(I,\mathbb{A})$  on the left hand side with the multiplication from Theorem~\ref{thmverlindecochain},
		\item and the vector space $\cat{C}(I,\mathbb{A})$ on the right hand side with the multiplication coming from the cup product on zeroth Hochschild cohomology 
		(which here is just the multiplication coming from the usual algebra structure $\gamma : \mathbb{A}\otimes\mathbb{A} \to \mathbb{A}$). 
	\end{itemize}
	Now the map
	$	\cat{C}(I,\mathbb{F}) \ra{\Psi^{-1}_*}    \cat{C}(I,\mathbb{A}) \ra{\mathfrak{S}}  \cat{C}(I,\mathbb{A}) 	$
	is an isomorphism of algebras if $\cat{C}(I,\mathbb{F})$ is endowed with the product coming from the multiplication $\mu:\mathbb{F}\otimes\mathbb{F}\to\mathbb{F}$ defined using the braiding of $\cat{C}$, see Proposition~\ref{propoFmult}.
	Recall that by  Remark~\ref{remrelclassfunction} the algebra $\cat{C}(I,\mathbb{F})$ actually agrees with the algebra of class functions of $\cat{C}$. In summary,  Theorem~\ref{thmdgva1}, when evaluated in zeroth cohomology, states that
	\begin{align}
	\mathfrak{S}\circ \Psi_*^{-1}:	\catf{CF}(\cat{C}) \ra{\Psi^{-1}_*}    \cat{C}(I,\mathbb{A}) \ra{\mathfrak{S}}  (\cat{C}(I,\mathbb{A}),\gamma_*)=(\cat{C}(I,\mathbb{A}),\circ) \label{eqncomppsis}
	\end{align}
	is an isomorphism of algebras.
	(Of course, when considering the composition~\eqref{eqncomppsis}, we can actually cancel $\Psi$, so that the statement that \eqref{eqncomppsis} is an isomorphism of algebras will alternatively follow from Proposition~\ref{propdrinfeldmapmorphismus}, but we actually need the factorization~\eqref{eqncomppsis} to compare to \cite{grv}.)
	With the definition of the family $(\phi_i)_{i=0,\dots,n}$, we find:
	\begin{align}
	\mathfrak{S}(\phi_i) \circ \mathfrak{S}(\phi_j) &= \mathfrak{S} (\phi_i \otimes \phi_j ) = \mathfrak{S} \circ \Psi_*^{-1} (\catf{ch} X_i \cdot \catf{ch} X_j) \\ &= \mathfrak{S} \circ \Psi_*^{-1} \left(    \sum_{\ell=0}^n N_{ij}^\ell\,  \catf{ch} X_\ell\right) 
	= \mathfrak{S}   \left(   \sum_{\ell=0}^n N_{ij}^\ell \phi_\ell\right) \ . 
	\end{align}
\end{proof}

We now prove our second main result. It is concerned with the effect of the $S$-transformation on the products on the Hochschild \emph{chain} complex.
As in the case of Theorem~\ref{thmdgva1}, the mapping class group action comes  from the differential graded modular functor that $\cat{C}$ gives rise to.

\begin{theorem}[Differential graded Verlinde formula for the Hochschild chain complex]\label{thmdgva2}
	For any modular category $\cat{C}$, the 
	action of the mapping class group element  $S=\begin{pmatrix} 0 & -1 \\ 1 &\phantom{-}0\end{pmatrix}\in \SL(2,\mathbb{Z})$ yields an equivalence
	\begin{align}  \mfc( S): \bspace{  \lint^{X\in \Proj \cat{C}} \cat{C}(X,X)  }{\otimes} \simeq \bspace{ \lint^{X\in \Proj \cat{C}} \cat{C}(X,X)  }{\star } 
	\end{align} 
	of non-unital $E_2$-algebras whose multiplication, up to homotopy, is concentrated in degree zero. \begin{itemize}
		\item On the left hand side, the $E_2$-structure is the differential graded Verlinde algebra on the Hochschild chains of $\cat{C}$
		induced the monoidal product \cite{dva}, see Proposition~\ref{propdgva}.
		
		\item On the right hand side, the non-unital $E_2$-structure is the almost trivial one that is a part of the cyclic version of Deligne's Conjecture applied to the Calabi-Yau structure coming from the modified trace
		on the tensor ideal of projective objects.
	\end{itemize}
\end{theorem}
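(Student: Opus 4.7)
The strategy is to follow the same pattern as in the proof of Theorem~\ref{thmdgva1}, but in the \emph{chain} rather than \emph{cochain} direction: use Proposition~\ref{propSmatrix} to rewrite $\mfc(S)$ as the composition of equivalences induced by the Drinfeld and Radford maps, and then recognize that this composition carries the algebra structure $\mu : \mathbb{F}\otimes\mathbb{F}\to\mathbb{F}$ on the canonical coend (which induces the differential graded Verlinde product $\otimes$ on Hochschild chains) to the product $\ostar : \mathbb{F}\otimes\mathbb{F}\to\mathbb{F}$ from Proposition~\ref{propmultostar} (which, by Theorem~\ref{thmstarPsiF}, induces $\star$).

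More precisely, I would first recall from Proposition~\ref{corresocoend2} that $\lint^{X\in\Proj\cat{C}}\cat{C}(X,X)\simeq \cat{C}(I,\pF)$ for a projective resolution $\pF$ of $\mathbb{F}$, and that the differential graded Verlinde product $\otimes$ on Hochschild chains is, under this equivalence, induced by (a lift of) the algebra structure $\mu : \mathbb{F}\otimes\mathbb{F}\to\mathbb{F}$ from Proposition~\ref{propoFmult}. Proposition~\ref{propSmatrix} then identifies $\mfc(S)$ with the composition
\begin{align}
\cat{C}(I,\pF)\ra{\drinfeld_\bullet}\cat{C}(I,\pA)\ra{\Psi_\bullet}\cat{C}(I,\pF)\ .
\end{align}
Now Proposition~\ref{propdrinfeldmapmorphismus} tells us that $\drinfeld:(\mathbb{F},\mu)\to(\mathbb{A},\gamma)$ is a map of algebras, and the description of $\ostar$ in Proposition~\ref{propmultostar}~\ref{seconddescription} combined with Lemma~\ref{lemmapsiomega} tells us that $\Psi:(\mathbb{A},\gamma)\ra{\cong}(\mathbb{F},\ostar)$ is an isomorphism of algebras. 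Consequently $\Psi\circ\drinfeld$ is a map of algebras $(\mathbb{F},\mu)\to(\mathbb{F},\ostar)$ that is an \emph{isomorphism} because modularity makes $\drinfeld$ an isomorphism. Applying $\cat{C}(I,-)$ to an appropriate projective resolution turns this into an equivalence of differential graded algebras from $\bspace{\cat{C}(I,\pF)}{\mu_\bullet}$ to $\bspace{\cat{C}(I,\pF)}{\ostar_\bullet}$.  Theorem~\ref{thmstarPsiF} then identifies the target with $\bspace{\lint^{X\in\Proj\cat{C}}\cat{C}(X,X)}{\star}$, establishing the assertion at the level of (non-unital) differential graded algebras.

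To promote this to an equivalence of non-unital $E_2$-algebras, one needs to track the higher operations. Here I would exploit the rigidity of the right-hand side: by the Wahl--Westerland result used to construct the trace field theory, the non-unital $E_2$-product $\star$ is, up to homotopy, concentrated in degree zero, so that the $E_2$-structure is essentially determined by the underlying multiplication. Combined with the fact that $\drinfeld$ lifts to a morphism $\mathbb{F}\dolph\to\alg$ of (braided commutative) algebras in $Z(\cat{C})$ (Proposition~\ref{propdrinfeldmapmorphismus}), one obtains via Proposition~\ref{propnaturality} (suitably adapted from the cochain to the chain resolution side) naturality of $\drinfeld_\bullet$ as a map of $E_2$-algebras, and a parallel naturality for $\Psi_\bullet$ coming from $\Psi$ being a morphism of algebras in $Z(\cat{C})$.

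The main technical obstacle is precisely this last step: the differential graded Verlinde $E_2$-structure on Hochschild \emph{chains} from \cite{dva} (Proposition~\ref{propdgva}) was not constructed via the dolphin-style homotopy-invariants formalism of Theorem~\ref{thmE2derivedhom} (which applies to cochains), so Proposition~\ref{propnaturality} does not apply verbatim. I would handle this by establishing a ``chain-level'' analogue: exhibit the $\otimes$-product on $\cat{C}(I,\pF)$ as coming, up to equivalence, from the braided commutative algebra structure on $\mathbb{F}\dolph \in Z(\cat{C})$, and transport it along $\drinfeld$ using the Drinfeld map's lift to $Z(\cat{C})$. The remaining identification of the result with $\star$ then reduces, in view of the degree-zero concentration of $\star$, to the algebra-level computation already carried out above, combined with Theorem~\ref{thmstarPsiF}.
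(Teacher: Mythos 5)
Your proposal is correct and follows essentially the same route as the paper's own proof: identify $\mfc(S)$ with $\Psi_\bullet\circ\drinfeld_\bullet$ via Proposition~\ref{propSmatrix}, use Proposition~\ref{propdrinfeldmapmorphismus} to see $\drinfeld$ as an algebra map $(\mathbb{F},\mu)\to(\mathbb{A},\gamma)$ and Proposition~\ref{propmultostar} (with Lemma~\ref{lemmapsiomega}) to see $\Psi$ as an algebra isomorphism onto $(\mathbb{F},\ostar)$, and conclude with Theorem~\ref{thmstarPsiF}. The only difference is that you are more explicit than the paper about how the degree-zero concentration of the products lets one upgrade the algebra-level statement to one of non-unital $E_2$-algebras, a point the paper treats tersely by citing the same propositions.
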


\begin{proof}
	Similarly to the  proof of Theorem~\ref{thmdgva1}, all ingredients have been  established, and we just tie them together:
	The effect of the mapping class group element $S$ was computed in Proposition~\ref{propSmatrix}:
	After the canonical identification $\lint^{X\in\Proj\cat{C}}\cat{C}(X,X)\simeq \cat{C}(I,\mathbb{F}_\bullet)$, it acts as the equivalence
	$
	\cat{C}(I,\pF) \ra{\drinfeld_\bullet }	\cat{C}(I,\pA) \ra{\Psi_\bullet} \cat{C}(I,\pF) $. 
	In fact, these maps are morphisms of non-unital $E_2$-algebras
	\begin{align}
	\bspace{	\cat{C}(I,\pF)}{\otimes} \ra{\drinfeld_\bullet}  \bspace{	\cat{C}(I,\pA)}{\gamma_\bullet} \ra{\Psi_\bullet} \bspace {\cat{C}(I,\pF)}{\ostar}  \ .  
	\end{align}
	This is	a consequence of 
	Proposition~\ref{propdrinfeldmapmorphismus} for $\drinfeld_\bullet$.
	For $\Psi_\bullet$, it follows from	Proposition~\ref{propmultostar}. 
	It remains to confirm that under the equivalence     $\lint^{X\in\Proj\cat{C}}\cat{C}(X,X)\simeq \cat{C}(I,\mathbb{F}_\bullet)$, the non-unital $E_2$-algebra $\bspace {\cat{C}(I,\pF)}{\ostar} $ translates into the non-unital $E_2$-algebra afforded by the cyclic Deligne Conjecture applied to the modified trace on the tensor ideal of projective objects.
	Indeed, this follows from $\bspace {\cat{C}(I,\pF)}{\ostar}\simeq \bspace{ \lint^{X\in \Proj \cat{C}} \cat{C}(X,X)  }{\star } $  (Theorem~\ref{thmstarPsiF}) and the fact that $\star$ is actually the non-unital $E_2$-multiplication coming from the cyclic Deligne Conjecture applied to the modified trace. The latter is a consequence of the results of \cite{trace}
	and in particular Theorem~\ref{thmmtrace} from above.
\end{proof}

\begin{remark}[Products versus coproducts]\label{remproductcoproduct}
	There seems to be an asymmetry between 
	Theorem~\ref{thmdgva1} on Hochschild cochains, 
	where two rather rich higher multiplicative structures are compared, and
	Theorem~\ref{thmdgva2} on Hochschild chains, which is concerned with an almost trivial product.  
	This asymmetry, however, is mostly a consequence of our way of presenting the results. If we formulated statements about \emph{coproducts} instead of products, the situation would be reversed.
\end{remark}

\begin{corollary}\label{correducessi}
	For a semisimple modular category $\cat{C}$,
	the statements of Theorem~\ref{thmdgva1} and~\ref{thmdgva2} 
	are equivalent and both amount precisely to the semisimple Verlinde formula.
\end{corollary}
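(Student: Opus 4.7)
The plan is to show that in the semisimple setting both theorems collapse onto Verlinde's classical statement \eqref{eqnverlindeformula} (equivalently \eqref{eqnverlindeisoalg}). Semisimplicity gives $\Proj\cat{C}=\cat{C}$, so the simplicial object \eqref{eqnresF} and the cosimplicial object \eqref{eqnresA} resolve $\mathbb{F}$ and $\mathbb{A}$ by objects already concentrated in degree zero. Consequently both $\lint^{X\in\Proj\cat{C}}\cat{C}(X,X)\simeq\cat{C}(I,\mathbb{F})$ and $\rint_{X\in\Proj\cat{C}}\cat{C}(X,X)\simeq\cat{C}(I,\mathbb{A})$ live in degree zero, and the coevaluations $b_{X_i}:I\to X_i\otimes X_i^\vee$, respectively their duals, identify both with the vector space $k\left[\,[x_0],\dots,[x_n]\,\right]$ spanned by the isomorphism classes of simple objects. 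Under this identification, the Calabi-Yau structure induced by the modified trace reduces, in the semisimple regime, to the ordinary categorical trace up to the standard normalisation by quantum dimensions, and thereby provides a compatible duality between the two complexes.

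Next I would translate all four products through these identifications. Since everything lives in degree zero, the $E_2$-structures become honest associative algebra structures with no higher homotopy data. The product $\otimes$ from Proposition~\ref{propdgva} reduces on $\cat{C}(I,\mathbb{F})$ to the fusion multiplication \eqref{eqnfusionmult}, as is immediate from its construction in \cite{dva}; combining this with Proposition~\ref{propdolphdrinfeld} (which asserts that the Drinfeld map is an isomorphism of $E_2$-algebras in the modular case) identifies the Hochschild-cochain Verlinde algebra of Theorem~\ref{thmverlindecochain} with the same product. On the other hand, the product $\star$ on the Hochschild chains from Theorem~\ref{thmdgva2} reduces, via the semisimple specialisation of \eqref{starproduct} already recorded in \eqref{eqnstarprod0}, to $[x_i]\star[x_j]=\delta_{i,j}d_i^{-1}[x_i]$, while the cup product $\smile$ on the Hochschild cochains is the multiplication inherited from $(\mathbb{A},\gamma)\cong\prod_i\End(X_i)$, namely the pointwise product $[x_i]\smile[x_j]=\delta_{i,j}[x_i]$ on the simple basis. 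These two diagonal products differ only by the weights $d_i$, which is precisely how the Calabi-Yau pairing identifies $\cat{C}(I,\mathbb{F})$ with $\cat{C}(I,\mathbb{A})^*$ in degree zero.

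With these identifications in place, I would then check that both $\mfc(S)$ of Theorem~\ref{thmdgva2} and $\mfd(S^{-1})$ of Theorem~\ref{thmdgva1} act on the simple basis by the classical $S$-matrix~\eqref{eqndefsmatrixelements} (up to the dualisation built into $\mfd$, which in this basis inverts the matrix). Concretely, this follows from Proposition~\ref{propSmatrix} combined with the standard semisimple computation of the Radford and Drinfeld maps on $\cat{C}(I,\mathbb{F})$; alternatively, it is the defining property of $S$ in a semisimple modular category. Both theorems then become the assertion that conjugation by $S$ intertwines the fusion product with the diagonal product, which is exactly the algebra isomorphism~\eqref{eqnverlindeisoalg} and hence equivalent to~\eqref{eqnverlindeformula}. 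The equivalence between the chain and cochain versions in this regime is precisely the Calabi-Yau duality observed in the first paragraph. The main obstacle is not any individual computation, each of which is essentially classical, but rather matching bases, normalisations by quantum dimensions, and orientations of the $S$-transformation consistently across the chain and cochain pictures.
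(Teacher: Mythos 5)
Your proposal is correct and follows essentially the same route as the paper: reduce everything to degree zero using semisimplicity, compute via Proposition~\ref{propSmatrix} that the $S$-action on the basis of simple objects is the classical $S$-matrix (with $\Psi_*[x_i]=d_i[x_i]$ since the modified trace becomes the quantum trace), identify $\otimes$ with the fusion product and $\star$ with the diagonal product $\delta_{i,j}d_i^{-1}[x_i]$, and recover~\eqref{eqnverlindeisoalg}. The only cosmetic difference is that you relate the chain and cochain statements directly through the Calabi-Yau pairing, whereas the paper disposes of the cochain case by observing that Theorem~\ref{thmdgva1} reduces to Corollary~\ref{corgrv}, which is already known to be equivalent to the classical Verlinde formula in the semisimple case.
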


\begin{proof}
	We choose a complete system $x_0=I,x_1,\dots,x_n$ of simple objects of $\cat{C}$ and denote by $[x_i]\in HH_0(\cat{C})$ the  element corresponding to the identity on $x_i$ in zeroth Hochschild homology. Then $HH_0(\cat{C})$ has $[x_0],\dots,[x_n]$ as its basis thanks to $HH_0(\cat{C})\cong \bigoplus_{i=0}^n \cat{C}(X_i,X_i)\cong \bigoplus_{i=0}^n k \cdot \id_{X_i}$.
	We denote by $\Psi_*$ the isomorphism $HH_0(\cat{C})\to HH_0(\cat{C})$ induced by the Radford map $\Psi : \mathbb{A}\to\mathbb{F}$  (here we fix $\mathbb{A}=\mathbb{F}=\bigoplus_{i=0}^n X_i^\vee \otimes X_i$ as a model for both the canonical coend and canonical end). From the concrete description of the inverse $\Omega$ of $\Psi$ (in particular \eqref{eqnpsiinvy} in the proof of Lemma~\ref{lemmapsiomega}), we extract $\Psi_*[x_i]=d_i [x_i]$, where $d_i$ is the usual quantum dimension of $x_i$ (because the modified trace reduces to the quantum trace in the semisimple case).  
	Now we can compute the effect of the $S$-transformation by
	\begin{align}
	\mfc(S)[x_i] \stackrel{\text{Proposition~\ref{propSmatrix}}}{=} \Psi_* \mathbb{D}_* [x_i] =  \sum_{j=0}^n d_j \cdot \tikzfig{Smatrix2-s} = \sum_{j=0}^n \tikzfig{Smatrix-s}\cdot [X_j] \ , 
	\end{align} i.e.\ it reduces to the $S$-matrix from the introduction.
	The product on zeroth Hochschild homology induced by the monoidal product in the sense of Proposition~\ref{propdgva} is just given by $[x_i]\otimes [x_j] = \sum_{\ell=0}^n N_{ij}^\ell [x_\ell]$ if $x_i\otimes x_j \cong \bigoplus_{\ell=0}^n N_{ij}^\ell x_\ell$; this can be observed directly or concluded from the description of this product in \cite{dva}. 
	Therefore, Theorem~\ref{thmdgva2} is equivalent to the semisimple Verlinde formula in the formulation~\eqref{eqnverlindeisoalg} if we can show that the multiplication $\star$ in Theorem~\ref{thmdgva2} (which was defined in~\eqref{eqnstarproduct})
	agrees with the $\star$-product $[x_i]\star [x_j]=d_i^{-1}\delta_{i,j} [x_i]$
	from~\eqref{eqnstarprod0}. Indeed, this follows from \cite[Theorem~5.6~(iii)]{trace} (again because the modified dimension agrees with the quantum dimension in the semisimple case).
	
	In order to see that Theorem~\ref{thmdgva1} (the cochain version) also reduces to the semisimple Verlinde formula, we 
	can perform a similar computation. Alternatively, we can observe that in the semisimple case,
	Theorem~\ref{thmdgva1} completely reduces to the statement extracted from it in
	Corollary~\ref{corgrv}, where we reproduced the result from~\cite{grv}. This statement, on the other hand, 
	is equivalent to the usual Verlinde formula in the semisimple case as explained in \cite{grv}.	 
\end{proof}

\label{discussblockdiagonal}
In the semisimple case, the
$S$-transformation transforms the multiplication induced by the monoidal product into a diagonal product $\star$ given in \eqref{eqnstarprod0}, where diagonal means $[X_i]\star [X_j]=0$ if $X_i$ and $X_j$ are non-isomorphic simple objects, i.e.\ if $\cat{C}(X_i,X_j)=0$.
In the non-semisimple case, Theorem~\ref{thmdgva2} achieves at least a \emph{block diagonalization}	 because the product $\star$ is block diagonal \cite[Proposition~5.3]{trace}.
In order to be more explicit, denote by $P_0,\dots,P_n$ a complete system of mutually non-isomorphic indecomposable projective objects of $\cat{C}$ and set $G:=\bigoplus_{i=0}^n P_j$. The object $G$ is a projective generator. We now define $B_1,\dots,B_m$ as the equivalence classes of the equivalence relation $P_i\simeq  P_j :\Leftrightarrow \cat{C}(P_i,P_j) \neq 0$ on $\{P_0,\dots, P_n\}$. We refer to these equivalence classes as \emph{blocks}.
The endomorphism algebra $A:= \cat{C}(G,G)$ 
allows us to write $\cat{C}$, as a linear category, as finite-dimensional modules over $A$. Moreover, $A$
becomes a symmetric Frobenius algebra via the modified trace. In the same way, the endomorphism algebras $A_\ell := \cat{C}(G_\ell,G_\ell)$ of $G_\ell:= \bigoplus_{P_i\in B_\ell} P_i$ for $1\le \ell\le m$
become symmetric Frobenius algebras, and we find
$
A\cong A_1 \oplus \dots \oplus A_m$ as symmetric Frobenius algebras. 
The Hochschild complex $\lint^{X\in\Proj\cat{C}}\cat{C}(X,X)$ is equivalent to the ordinary Hochschild complex of $A$.
In degree zero, i.e.\ on $A$, the product $\star$
from~\eqref{eqnstarproduct}
is given by
\begin{align}
a \star b = a' b a'' \quad \text{for}\quad a,b \in A        \label{starproducteqn}
\end{align} with Sweedler notation $\Delta a = a'\otimes a''$ (note that $\star$ yields only a commutative associative multiplication on $HH_0(\cat{C})=A/[A,A]$, but not on $A$); this follows from \cite{wahlwesterland} or also \cite[Lemma~5.1]{trace}.
The operation being block diagonal now means exactly that it preserves the decomposition
$	A\cong A_1 \oplus \dots \oplus A_m$ in the sense $A_\ell \star A_\ell \subset A_\ell$ and $A_\ell \star A_{\ell'}=0$ for $\ell\neq \ell'$.

A tensor product of the indecomposable projective  objects $P_0,\dots,P_n$ may be decomposed
according to $ P_i \otimes P_j \cong \bigoplus_{\ell=0}^n P_\ell^{\oplus M_{ij}^\ell}$, 
where the multiplicities $M_{ij}^\ell \in \mathbb{N}_0$ are the structure constants of the ring $K_0(\cat{C})$ that as an abelian group is generated by $[P_0],\dots,[P_n]$. 
Via the map \begin{align}K_0(\cat{C})\otimes_\mathbb{Z} k \ra{ [P_i] \mapsto \id_{P_i}  } \bigoplus_{j=0}^n \cat{C}(P_i,P_i)\to HH_0(\cat{C}) \ , 
\end{align}$[P_i]$
gives rise to a class in $HH_0(\cat{C})$ that we denote by $h_i \in HH_0(\cat{C})\cong A / [A,A]$.
If we act with the $S$-transformation on $h_i$, we may represent the result by an element $\mathfrak{s}_i \in A$; the choice we are making here is unique up to commutator. 

\begin{corollary}\label{corKzero}
	With the above notation,
	$	\mathfrak{s}_i' \mathfrak{s}_j \mathfrak{s}_i'' =  \sum_{\ell=0}^n M_{ij}^\ell \mathfrak{s}_\ell \ \text{mod}\  [A,A] $, 
	where $\Delta \mathfrak{s}_i = \mathfrak{s}_i' \otimes \mathfrak{s}_i''$ is the Sweedler notation for the coproduct of the Frobenius structure on $A$ coming from the modified trace.
\end{corollary}

\begin{proof}
	Modulo $[A,A]$, we find
	$	\sum_{\ell=0}^n M_{ij}^\ell \mathfrak{s}_\ell
	= S \left( \sum_{\ell}^n M_{ij}^\ell h_\ell \right) = 
	S (h_i \otimes h_j)$ which agrees with $\mathfrak{s}_i \star \mathfrak{s}_j$ by Theorem~\ref{thmdgva1}. Now we use \eqref{starproducteqn}.
\end{proof}

The two main Theorems~\ref{thmdgva1} and~\ref{thmdgva2} can be combined as follows:
Let $\cat{C}$ be a modular category.
By Proposition~\ref{propdgva} the monoidal product induces a non-unital $E_2$-multiplication $\otimes$ on the Hochschild chain complex
supported, up to equivalence, in homological degree zero. In degree zero, however, it is relatively complicated and can be described by the $S$-transformation and the modified trace, see Theorem~\ref{thmdgva2}.
The $E_2$-multiplication on the Hochschild cochain complex
induced by the monoidal product and unimodularity (Theorem~\ref{thmverlindecochain}) behaves totally differently; it will generally have a non-trivial Gerstenhaber bracket and is unital.	
By means of the Calabi-Yau structure, we can dualize it to an $E_2$-coproduct on the Hochschild chain complex that we denote by $\Delta$,
see Remark~\ref{remproductcoproduct}. Finally, we can define the following $S$-twisted version of the modified trace
\begin{align}
\tau : \lint^{X\in\Proj\cat{C}} \cat{C}(X,X) \ra{\text{$S$-transformation}} \lint^{X\in\Proj\cat{C}} \cat{C}(X,X)\ra{\text{modified trace}} k \ . 
\end{align}
The three maps $\otimes, \Delta$ and $\tau$ combine into a closed topological conformal field theory.

\begin{theorem}\label{thmlambda}
	Let $\cat{C}$ be a modular category, then the following assignments extend in a canonical way to a closed topological conformal field theory $\Lambda_\cat{C}: \C \to \Ch$ with value $\lint^{X\in\Proj\cat{C}} \cat{C}(X,X)$ on the circle and	
	\begin{align}
	\label{eqnassignments}	\Lambda_\cat{C}\left(\tikzfig{pop-s-vd}{\tiny:\left( \mathbb{S}^1\right)^{\sqcup 2}  \rightarrow  \mathbb{S}^1}\right) := \otimes \ , \quad 	\Lambda_\cat{C}\left(\tikzfig{poprev-s-vd}{\tiny: \mathbb{S}^1\rightarrow \left(      \mathbb{S}^1\right)^{\sqcup 2}  }  \right) := \Delta \ , \quad \Lambda_\cat{C}\left(\tikzfig{diskin-s-vd} {\tiny : \mathbb{S}^1\rightarrow \emptyset}\right) := \tau \, . 
	\end{align}
\end{theorem}

\begin{proof}
	Denote by $\Phi_\cat{C}:\OC\to\Ch$ the trace field theory of $\cat{C}$ \cite{trace} and by $j : \C \to \OC$ the inclusion of the closed part of $\OC$ into $\OC$. We write $j^* \Phi_\cat{C}=\Phi_\cat{C}\circ j$ for the  restriction of $\Phi_\cat{C}$ to the closed part of $\OC$.
	The assertion follows if we can show
	$
	\Lambda_\cat{C}(\Sigma) = \left( \mfc(S^{-1})\right)^{\otimes q} \circ j^*\Phi_\cat{C}\left(\Sigma\right) \circ \mfc(S)^{\otimes p} \label{eqnfcphic2} $
	($p$ and $q$ are the number incoming and outgoing boundary components of $\Sigma$, respectively),
	where $\Sigma$ is one of the three bordisms in~\eqref{eqnassignments}
	and $\Lambda_\cat{C}(\Sigma)$ is defined as above.
	Indeed:
	If $\Sigma$ is the pair of pants, the needed statement
	is exactly Theorem~\ref{thmdgva2}, i.e.\ the Verlinde formula for Hochschild chains.
	If $\Sigma$ is the opposite pair of pants, the needed statement
	follows from Theorem~\ref{thmdgva1}, i.e.\ the Verlinde formula for Hochschild cochains, because the evaluation of $\Phi_\cat{C}$ on the reversed pair of pants is the usual $E_2$-structure on Hochschild cochains, but dualized via the Calabi-Yau structure (the latter is a part of Costello's result \cite{costellotcft}).
	Finally, if $\Sigma$ is the disk, the needed statement
	follows because the evaluation of the trace field theory 
	on the disk is induced by the evaluation of $\Phi_\cat{C}$ 
	on labeled disks, where it is given by the modified trace \cite[Theorem~4.9]{trace}. 
\end{proof}

\begin{corollary}[Partial three-dimensional extension of the differential graded modular functor]\label{corhighgenus}
	The differential graded modular functor $\mfc$ associated to a modular category $\cat{C}$ 
	extends to three-dimensional oriented bordisms of the form $\Sigma \times\mathbb{S}^1: \left( \mathbb{T}^2\right)^{\sqcup p} \to \left( \mathbb{T}^2\right)^{\sqcup q}$, where $\Sigma :  \left( \mathbb{S}^1\right)^{\sqcup p} \to \left( \mathbb{S}^1\right)^{\sqcup q}$ is a compact oriented two-dimensional bordism such that every component of $\Sigma$ has at least one incoming boundary component. 
\end{corollary}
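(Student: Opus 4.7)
The plan is to bootstrap the closed topological conformal field theory $\Lambda_\cat{C}: \C \to \Ch$ constructed in the preceding theorem to produce the needed three-dimensional extension on product bordisms. First I would observe that the class of two-dimensional oriented bordisms $\Sigma : (\mathbb{S}^1)^{\sqcup p} \to (\mathbb{S}^1)^{\sqcup q}$ for which every connected component has at least one incoming boundary is closed under composition and disjoint union and is generated (under these operations) by the pair of pants, the reversed pair of pants, the cylinder, and the cup disk $\mathbb{S}^1 \to \emptyset$. This is exactly the domain of $\Lambda_\cat{C}$: the closed disk $\emptyset \to \mathbb{S}^1$ (which would furnish a unit) is excluded, consistently with Remark~\ref{remfurtherextension} and with the fact that $\otimes$ on the Hochschild chain complex is only non-unital in the non-semisimple case.

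Next, for such a $\Sigma$, I would define
\begin{align}
\mfc(\Sigma \times \mathbb{S}^1) \ :\ \mfc(\mathbb{T}^2)^{\otimes p} \to \mfc(\mathbb{T}^2)^{\otimes q}
\end{align}
as the conjugate of $\Lambda_\cat{C}(\Sigma)$ by the canonical equivalence $\mfc(\mathbb{T}^2) \simeq \lint^{X\in\Proj\cat{C}} \cat{C}(X,X)$ from Section~\ref{secrevewidmf}. Since $\Lambda_\cat{C}$ is a symmetric monoidal functor from the relevant subcategory of $\C$, this assignment automatically respects composition, disjoint union, and the relations that hold among the generators. Evaluating on the relevant pants-times-circle and disk-times-circle bordisms then reproduces the product $\otimes$ from Theorem~\ref{thmdgva2}, its Calabi-Yau dual from Theorem~\ref{thmverlindecochain}, and (on the solid torus as a bordism $\mathbb{T}^2\to\emptyset$) the modified trace precomposed with $\mfc(S)$, exactly as announced.

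The remaining point, and the most subtle one, is to verify that the assignment is compatible with the action of the mapping class group of the torus that $\mfc$ already carries. For mapping classes of $\mathbb{T}^2$ of the form $\varphi \times \id_{\mathbb{S}^1}$, i.e.\ those generated by the Dehn twist about the first circle factor, the compatibility is immediate from the definition of $\Lambda_\cat{C}$ as the pullback of $\Phi_\cat{C}$ along $\mfc(S^{\pm 1})$ used in the preceding theorem, combined with the fact that $\Phi_\cat{C}$ already carries the cylinder-level $\catf{Diff}^+(\mathbb{S}^1)$-action. The essential case is the $S$-transformation itself, which does not come from a $\Sigma\times\mathbb{S}^1$ decomposition: here the compatibility is precisely what Theorems~\ref{thmdgva1} and~\ref{thmdgva2} (together with the characterization of $\star$ via $\Phi_\cat{C}$ in \cite{trace}) were designed to provide, since they intertwine the $S$-action with the pants multiplications on both the chain and cochain side.

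The principal obstacle I expect is bookkeeping around the two different identifications of $\mfc(\mathbb{T}^2)$ used for the ``incoming'' and ``outgoing'' torus boundaries (one via the Hochschild complex $\lint^{X}\cat{C}(X,X)$ and its dual via the Calabi-Yau structure), and checking that the twisting by $\mfc(S^{\pm 1})$ in the defining formula $\Lambda_\cat{C}(\Sigma) = \mfc(S^{-1})^{\otimes q}\circ j^*\Phi_\cat{C}(\Sigma)\circ \mfc(S)^{\otimes p}$ is compatible with gluing. This is, however, exactly the content of $\Lambda_\cat{C}$ being a well-defined symmetric monoidal functor out of the closed subcategory of $\OC$, so no new argument beyond the preceding theorem is required.
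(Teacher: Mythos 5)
Your overall strategy --- setting $\mfc(\Sigma\times\mathbb{S}^1):=\Lambda_{\cat{C}}(\Sigma)$ and letting the closed topological conformal field theory $\Lambda_{\cat{C}}$ of the preceding theorem take care of composition and disjoint union --- is the same as the paper's, and the preliminary reduction to generators is harmless (if unnecessary, since $\Lambda_{\cat{C}}$ is already defined on all of $\C$). But there is a genuine gap at the one place where the paper's proof has actual content: you transport $\Lambda_{\cat{C}}(\Sigma)$ along ``the canonical equivalence $\mfc(\mathbb{T}^2)\simeq\lint^{X\in\Proj\cat{C}}\cat{C}(X,X)$ from Section~\ref{secrevewidmf}''. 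No such canonical equivalence exists. The equivalence \eqref{eqnmarking} depends on the choice of a colored marking on the torus, and different choices differ by the highly non-trivial $\SL(2,\mathbb{Z})$-action --- this non-canonicity is precisely what makes the mapping class group action exist in the first place. Without resolving it, your definition of $\mfc(\Sigma\times\mathbb{S}^1)$ is ambiguous up to conjugation by arbitrary mapping classes, which is fatal here because Theorems~\ref{thmdgva1} and~\ref{thmdgva2} show that the multiplications $\otimes$ and $\star$ are \emph{not} invariant under that action.

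The paper closes this gap geometrically: the product bordism $\Sigma\times\mathbb{S}^1$ singles out a spectator circle direction in each boundary torus, hence a distinguished colored cut system with one cut transversal to it, and via \eqref{eqnmarkedblockequiv} this distinguished marking furnishes the identification of $\lint^{X\in\Proj\cat{C}}\cat{C}(X,X)$ with $\mfc(\mathbb{T}^2)$ canonically \emph{for these particular bordisms}. You do sense that an identification issue lurks (your ``bookkeeping around the two different identifications''), but you misdiagnose it as an incoming-versus-outgoing, Calabi-Yau-duality matter and then dismiss it as a consequence of $\Lambda_{\cat{C}}$ being a well-defined functor --- which it is not, since $\Lambda_{\cat{C}}$ lives entirely on the Hochschild-complex side and says nothing about how that complex is to be glued to $\mfc(\mathbb{T}^2)$. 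The discussion of compatibility with the $S$-transformation in your third paragraph is likewise beside the point: the corollary only asserts an extension to bordisms of the form $\Sigma\times\mathbb{S}^1$, and mapping classes not preserving the spectator direction are neither of this form nor claimed to interact with the extension. The missing idea is the spectator-direction argument, and it does require a (short) new argument beyond the preceding theorem.
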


\begin{remark}\label{remfurtherextension}
	Corollary~\ref{corhighgenus} does \emph{not} include an extension to bordisms of the form $\Sigma \times \mathbb{S}^1$ if 
	$\Sigma$ has no incoming boundary components. If we included this, we would have admitted enough bordisms such that $\mathbb{T}^2$ comes with an evaluation and a coevaluation.
	But this would imply that $H_*\mathfrak{F}_\cat{C}(\mathbb{T}^2)\cong HH_*(\cat{C})$ is a dualizable, hence finite-dimensional 
	graded vector space, and this will generally not be the case. 
	In fact, we are not aware of \emph{any} case where $\dim\, HH_*(\cat{C})<\infty$ holds in the non-semisimple situation.
\end{remark}

Corollary~\ref{corhighgenus}  offers a \emph{partial three-dimensional extension} of $\mathfrak{F}_\cat{C}$
to bordisms of the form $\Sigma \times \mathbb{S}^1$ subject to the condition that each component of $\Sigma$ has at least one incoming boundary component. 
Although the extension is not complete, it is exactly substantial enough for the \emph{dimensional reduction} $ \catf{Red}_{\mathbb{S}^1} \mathfrak{F}_\cat{C} := \mathfrak{F}_\cat{C}(\mathbb{S}^1\times-)$ to exist (where the requirements on the number of boundary components are still implicit).
Then Theorem~\ref{thmlambda} and its proof immediately imply the following compact reformulation of our results that comprises simultaneously 
the Verlinde formula for the Hochschild chains and cochains:

\begin{corollary}[Higher genus Verlinde formula]\label{cordimred}
	For any modular category $\cat{C}$,
	we find \begin{align}\label{eqndimred}\catf{Red}_{\mathbb{S}^1} \mathfrak{F}_\cat{C}\ \stackrel{S}{\simeq} \ \Phi_\cat{C}\ , \end{align}
	i.e.\
	the dimensional reduction of the partial extension of the differential graded modular functor $\mathfrak{F}_\cat{C}$ to non-invertible three-dimensional bordisms from Corollary~\ref{corhighgenus} is equivalent, via the $S$-transformation, to the trace field theory $\Phi_\cat{C}$ of $\cat{C}$ (the topological conformal field theory associated to the modified trace).
	This is an equivalence of closed topological conformal field theories.
\end{corollary}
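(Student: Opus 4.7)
The plan is to read off the claim directly from the preceding theorem (which constructs the closed TCFT $\Lambda_\cat{C}:\C\to\Ch$) together with the definition of the partial three-dimensional extension in Corollary~\ref{corhighgenus}, with essentially no new input required beyond tying these two pieces together.

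First I would unpack the two sides of the proposed equivalence~\eqref{eqndimred}. By the definition~\eqref{defmfcred}, on any bordism $\Sigma\times \mathbb{S}^1$ with $\Sigma$ having at least one incoming boundary per component, we have $\catf{Red}_{\mathbb{S}^1}\mathfrak{F}_\cat{C}(\Sigma) = \mathfrak{F}_\cat{C}(\Sigma\times \mathbb{S}^1) = \Lambda_\cat{C}(\Sigma)$, where the identification $\lint^{X\in\Proj\cat{C}}\cat{C}(X,X)\simeq \mathfrak{F}_\cat{C}(\mathbb{T}^2)$ is the canonical one furnished by the spectator direction on each torus boundary (see the proof of Corollary~\ref{corhighgenus}). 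So the dimensional reduction is identified with $\Lambda_\cat{C}$ as a functor on $\C$. The proposed right-hand side of \eqref{eqndimred} is the arity-wise $S$-conjugate $(\mfc(S^{-1}))^{\otimes q}\circ j^*\Phi_\cat{C}(\Sigma)\circ \mfc(S)^{\otimes p}$ of the closed part of the trace field theory.

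Second, I would invoke the identity \eqref{eqnfcphic2} on the three generating bordisms of $\C$: the pair of pants (Theorem~\ref{thmdgva2}), the reversed pair of pants (Theorem~\ref{thmdgva1}, dualized via the Calabi-Yau structure as discussed in Remark~\ref{remproductcoproduct}), and the disk with one incoming boundary (where both sides reduce to the modified trace, precomposed with $\mfc(S)$). This gives the sought-after equivalence on generators.

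The one step that requires care, rather than pure bookkeeping, is promoting this generator-wise identity to an equivalence of closed topological conformal field theories on all of $\C$. The preceding theorem has already shown $\Lambda_\cat{C}$ is a bona fide closed TCFT, and $j^*\Phi_\cat{C}$ is one by Costello's theorem (together with the results of Egas Santander and Wahl--Westerland in positive characteristic); the $S$-conjugation is an equivalence of TCFTs because $\mfc(S)$ is an automorphism of $\mathfrak{F}_\cat{C}(\mathbb{T}^2)$. Since the closed bordism operad is generated (up to coherent gluing) by the three basic bordisms above, an equivalence matching on the generators and compatible with composition propagates uniquely to the full TCFT level, yielding~\eqref{eqndimred}. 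Thus the main obstacle, if any, is coherence of this extension, but it is handled entirely by the closed-TCFT structure constructed just before Corollary~\ref{corhighgenus}.
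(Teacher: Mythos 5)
Your proposal is correct and follows essentially the same route as the paper: the paper derives Corollary~\ref{cordimred} directly by combining the identity~\eqref{eqnfcphic2} (established on the generating bordisms in the preceding theorem) with the definition~\eqref{defmfcred} of the partial extension, exactly as you do. Your added remark on propagating the generator-wise identity to the full closed TCFT is a reasonable elaboration of what the paper compresses into ``immediately imply,'' and is already implicitly handled by the closed-TCFT structure established for $\Lambda_\cat{C}$ in the preceding theorem.
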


One could na\"ively think that one could \emph{define}
the partial three-dimensional extension in a way that 
makes the above Corollary a tautology, but this does not work: The non-trivial point is that Corollary~\ref{cordimred} is not a statement about \emph{some} partial extension of $\mfc$, but \emph{the one obtained from Corollary~\ref{corhighgenus}}, for which we have given a concrete description \emph{independent} of~\eqref{eqndimred}. Hence, if one used \eqref{eqndimred} as a definition, one would still need the Verlinde formula for both Hochschild chains and cochains to arrive at Corollary~\ref{cordimred}.
As yet another caveat in connection to Corollary~\ref{cordimred}, it is important to stress that the trace field theory $\Phi_\cat{C}$ only knows about the dimensional reduction of the partial extension of $\mathfrak{F}_\cat{C}$, but has practically no information on the mapping class group actions on differential graded conformal blocks.
Instead, \eqref{eqndimred}  describes the multiplicative structures on $\mathfrak{F}_\cat{C}(\mathbb{T}^2)$ in terms of the linear category $\cat{C}$ and the modified trace which, after all, is very much in the spirit of the original Verlinde formula.

\small 
\spaceplease

\end{document}